\newtheorem{theorem}{Theorem}
\newtheorem{proposition}[theorem]{Proposition}
\newtheorem{corollary}[theorem]{Corollary}
\newtheorem{conjecture}{Conjecture}
\theoremstyle{definition}
\newtheorem{example}{Example}[subsection]
\theoremstyle{remark}
\title{Virtual Parity Alexander Polynomial}
\author{Heather A. Dye}
\author{Aaron Kaestner}
\begin{document}
\begin{abstract} In this paper, we define the parity virtual Alexander polynomial following the work of BDGGHN \cite{boden} and  Kaestner and Kauffman \cite{KaestnerKauffman}. The properties of this invariant are explored and some examples are computed. In particular, the invariant demonstrates that many virtual knots can not be unknotted by crossing change on only odd crossings.
\end{abstract}
\maketitle

\section{Introduction}
The motivation is to create an Alexander-type polynomial that gives lower bounds on the number of virtual and odd crossings in a virtual link. Our method involves constructing a group that respects the Reidemeister and virtual Reidemeister moves and that differentiates between odd and even classical crossings, as well as virtual crossings. We use classical techniques to construct an Alexander type polynomial. See BDGGHN's work \cite{boden} and  Kitano's survey paper on Alexander polynomials\cite{kitano} for reference. See Kaestner and Kauffman's work \cite{KaestnerKauffman} or KNS  \cite{kaestnernelson}  for reference on using parity with biquandle structures.
Differentiating between even and odd crossings results in a polynomial that gives additional information about the non-planarity of virtual knot.

In Section 2, we review virtual knots. Section 3 introduces the virtual parity group and we derive the virtual parity Alexander module in Section 4 and compute some examples. In Section 5, we examine the properties of the polynomial.
% something about the geometric aspects?

\section{Virtual Knots}

\begin{figure}
\begin{subfigure}{0.49\linewidth}
\[ \begin{array}{c} \scalebox{0.5}{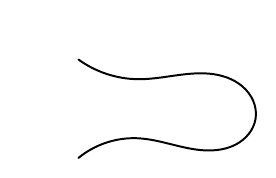} \end{array} \leftrightarrow
\begin{array}{c} \scalebox{0.5}{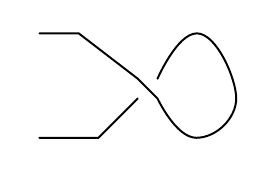} \end{array} \]
\caption{Reidemeister I}
\label{fig:r1move}
\end{subfigure}
\begin{subfigure}{0.49\linewidth}
\[ \begin{array}{c} \scalebox{0.5}{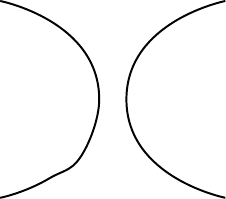} \end{array} \leftrightarrow
\begin{array}{c} \scalebox{0.5}{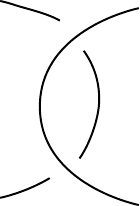} \end{array} \]
\caption{Reidemeister II}
\label{fig:r2move}
\end{subfigure} \\
\begin{subfigure}{0.80\linewidth}
\[ \begin{array}{c} \scalebox{0.5}{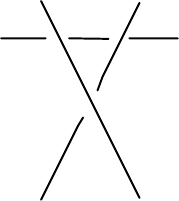} \end{array} \leftrightarrow
\begin{array}{c} \scalebox{0.5}{ 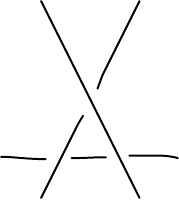} \end{array} \]
\caption{Reidemeister III}
\label{fig:r3move}
\end{subfigure}
\caption{Reidemeister moves}
\label{fig:rmoves}
\end{figure}
A virtual knot diagram is a decorated immersion of $S^1$ into the plane. There are two types of double points: classical crossings (indicated by over/under markings) and virtual crossings (indicated by a circled crossing). Two virtual knot diagrams, $K_1$ and $K_2$, are equivalent if one can be transformed into the other by a sequence of Reidemeister moves and virtual Reidemeister moves \cite{introvkt}.
A virtual knot is an equivalence class of virtual link diagrams determined by the Reidemeister moves and the virtual Reidemeister moves. For convenience, we collectively refer to the Reidemeister and virtual Reidemeister moves as the diagrammatic moves.
\begin{figure}
\begin{subfigure}{0.49\linewidth}
\[ \begin{array}{c} \scalebox{0.5}{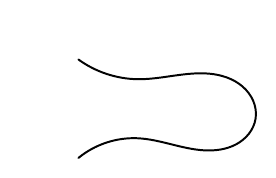} \end{array} \leftrightarrow
\begin{array}{c} \scalebox{0.5}{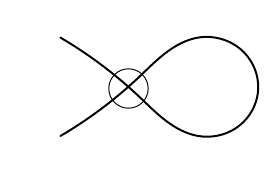} \end{array} \]
\caption{Virtual I}
\label{fig:vr1move}
\end{subfigure}
\begin{subfigure}{0.49\linewidth}
\[ \begin{array}{c} \scalebox{0.5}{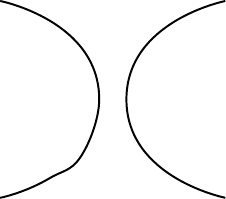} \end{array} \leftrightarrow
\begin{array}{c} \scalebox{0.5}{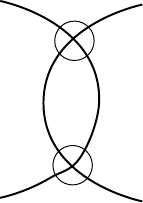} \end{array} \]
\caption{Virtual II}
\label{fig:vr2move}
\end{subfigure} \\
\begin{subfigure}{0.49\linewidth}
\[ \begin{array}{c} \scalebox{0.5}{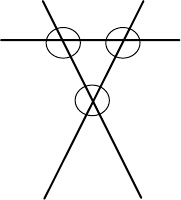} \end{array} \leftrightarrow
\begin{array}{c} \scalebox{0.5}{ 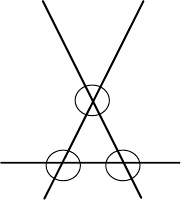} \end{array} \]
\caption{Virtual III}
\label{fig:vr3move}
\end{subfigure}
\begin{subfigure}{0.49\linewidth}
\[ \begin{array}{c} \scalebox{0.5}{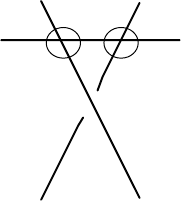} \end{array} \leftrightarrow
\begin{array}{c} \scalebox{0.5}{ 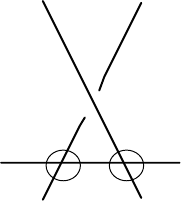} \end{array} \]
\caption{Virtual IV}
\label{fig:vr4move}
\end{subfigure}
\caption{Virtual Reidemeister moves}
\label{fig:vrmoves}
\end{figure}

Equivalently, virtual knots may be defined as a pair $(F \times I, K)$ where $F$ is a compact, oriented surface and $K$ is an embedding of $S^1$ into $F \times I$. Two such pairs, $ (F \times I, K)$ and
$(F' \times I, K')$, are equivalent  if $F \times I$ can be transformed into $F' \times I$ via handle stablization/destablizations
and $K$ can be transformed into $K'$ by isotopy (Reidemeister moves) or Dehn twists of the surface. Equivalence classes of these pairs are in bijective correspondence with virtual knots.  See  the article by Carter, Kamada, and Saito on stable equivalence \cite{carterkamada} or  Kamada and Kamada's article  for a description of abstract link diagrams \cite{aldkamada}.

\subsection{Even and Odd Crossings}
Even and odd crossings arise from the parity of a virtual knot; see Ilyutko, Manturov, and Nikonov \cite{ManturovIlyutkoNikonov}.
To determine if a crossing is either even or odd, we choose a base point in the knot diagram and place a label at each classical crossing. The knot is then traversed and the labels are recorded as encountered.
For example, the knot shown in figure \ref{fig:gausscodeexample} has the
code $abacbc$. This is a simplified version of the Gauss code of the knot given in \cite{introvkt}.

\begin{figure}
\[ \begin{array}{c}\scalebox{0.25} {\fontsize{204pt}{20pt}\selectfont%% Creator: Inkscape inkscape 0.92.4, www.inkscape.org
%% PDF/EPS/PS + LaTeX output extension by Johan Engelen, 2010
%% Accompanies image file 'gausscodeexample.pdf' (pdf, eps, ps)
%%
%% To include the image in your LaTeX document, write
%%   \input{<filename>.pdf_tex}
%%  instead of
%%   \includegraphics{<filename>.pdf}
%% To scale the image, write
%%   \def\svgwidth{<desired width>}
%%   \input{<filename>.pdf_tex}
%%  instead of
%%   \includegraphics[width=<desired width>]{<filename>.pdf}
%%
%% Images with a different path to the parent latex file can
%% be accessed with the `import' package (which may need to be
%% installed) using
%%   \usepackage{import}
%% in the preamble, and then including the image with
%%   \import{<path to file>}{<filename>.pdf_tex}
%% Alternatively, one can specify
%%   \graphicspath{{<path to file>/}}
%% 
%% For more information, please see info/svg-inkscape on CTAN:
%%   http://tug.ctan.org/tex-archive/info/svg-inkscape
%%
\begingroup%
  \makeatletter%
  \providecommand\color[2][]{%
    \errmessage{(Inkscape) Color is used for the text in Inkscape, but the package 'color.sty' is not loaded}%
    \renewcommand\color[2][]{}%
  }%
  \providecommand\transparent[1]{%
    \errmessage{(Inkscape) Transparency is used (non-zero) for the text in Inkscape, but the package 'transparent.sty' is not loaded}%
    \renewcommand\transparent[1]{}%
  }%
  \providecommand\rotatebox[2]{#2}%
  \newcommand*\fsize{\dimexpr\f@size pt\relax}%
  \newcommand*\lineheight[1]{\fontsize{\fsize}{#1\fsize}\selectfont}%
  \ifx\svgwidth\undefined%
    \setlength{\unitlength}{196.70327777bp}%
    \ifx\svgscale\undefined%
      \relax%
    \else%
      \setlength{\unitlength}{\unitlength * \real{\svgscale}}%
    \fi%
  \else%
    \setlength{\unitlength}{\svgwidth}%
  \fi%
  \global\let\svgwidth\undefined%
  \global\let\svgscale\undefined%
  \makeatother%
  \begin{picture}(1,1.26329479)%
    \lineheight{1}%
    \setlength\tabcolsep{0pt}%
    \put(0,0){\includegraphics[width=\unitlength,page=1]{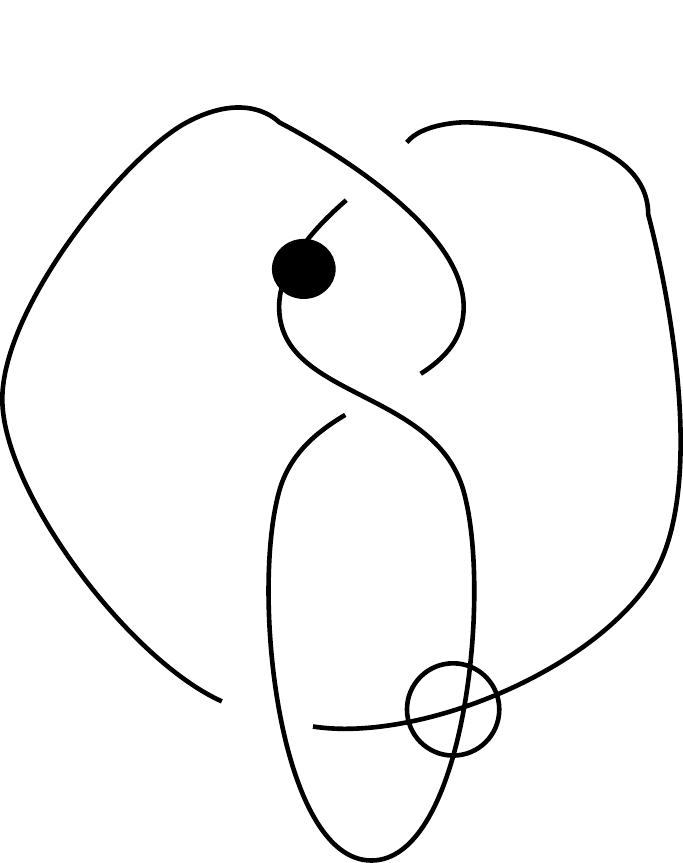}}%
    \put(0.48985955,1.15457984){\color[rgb]{0,0,0}\makebox(0,0)[lt]{\lineheight{0}\smash{\begin{tabular}[t]{l}a\end{tabular}}}}%
    \put(0.74927717,0.63574479){\color[rgb]{0,0,0}\makebox(0,0)[lt]{\lineheight{0}\smash{\begin{tabular}[t]{l}b\end{tabular}}}}%
    \put(0.23553779,0.0141616){\color[rgb]{0,0,0}\makebox(0,0)[lt]{\lineheight{0}\smash{\begin{tabular}[t]{l}c\end{tabular}}}}%
  \end{picture}%
\endgroup%
} \end{array} \]
\caption{Labeled knot diagram}
\label{fig:gausscodeexample}
\end{figure}

Crossings that are evenly intersticed are \textit{even} crossings. Crossings that are oddly intersticed are \textit{odd} crossings. In the example from figure \ref{fig:gausscodeexample}, the crossings $a$ and $c$ are odd while crossing $b$ is even. In knot diagrams without virtual crossings, every crossing is even.  The parity of a crossing abstractly gives information about the planarity of the diagram.  For more information about even and odd crossings and their interactions with the Reidemeister moves, see Chrisman and Dye \cite{chrismandye}, or Kaestner and Kauffman \cite{KaestnerKauffman}.

\section{The virtual parity group}

The \textit{virtual parity group} of a virtual knot diagram $K$, $PG_K$, is a free group modulo relations determined by the virtual semi-arcs of the diagram and the crossings.  The \textit{virtual semi-arcs} of the diagram are the edges of the diagram which begin and terminate at either a virtual or real crossing. The generators of $PG_K$ are the labels assigned to the semi-arcs and the generators $s,q,$ and $\theta$.
The crossings determine the group relations and the group is invariant under the Reidemeister and virtual Reidemeister moves.
For information about related group structures on virtual knot groups, see BGHNW \cite{bodengroup} and Silver and Williams \cite{silverwilliamsgroup}.

 For a virtual knot diagram $K$ with $n$ total crossings, the  virtual parity group has the form:
\begin{equation*}
PG_K = \lbrace  a_1, a_2,  \ldots a_{2n}, s,q, \theta \vert r_{11}, r_{12}, r_{21}, r_{22}, \ldots
r_{n1},r_{n2}, [s,q], [s, \theta], [q, \theta] \rbrace.
\end{equation*}
The  terms $r_{i1}$ and $r_{i2}$ are obtained from the $i$th crossing.  The commutator terms  $ [a,b]$($ = aba^{-1}b^{-1}$) insure invariance under the diagrammatic moves.
Classical even, classical odd, and virtual crossings determine different relations.

\begin{figure}
\begin{subfigure}{0.32\linewidth}
\[ \begin{array}{c}\scalebox{0.5} {\fontsize{204pt}{20pt}\selectfont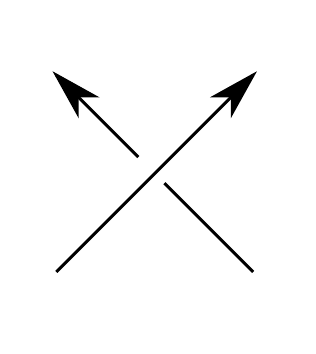} \end{array} \]
\caption{Positive  crossing}
\label{fig:positivecrossing}
\end{subfigure}
\begin{subfigure}{0.32\linewidth}
\[ \begin{array}{c} \scalebox{0.5}{\fontsize{204pt}{20pt}\selectfont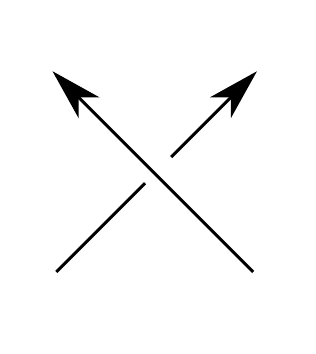} \end{array} \]
\caption{Negative  crossing}
\label{fig:negativecrossing}
\end{subfigure}
\begin{subfigure}{0.32\linewidth}
\[ \begin{array}{c} \scalebox{0.5}{\fontsize{204pt}{20pt}\selectfont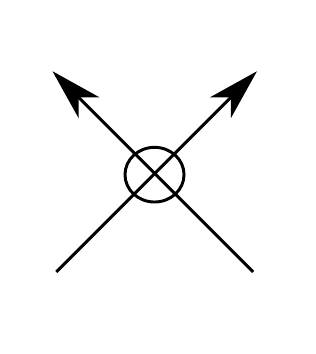} \end{array} \]
\caption{Virtual crossing}
\label{fig:virtualecrossing}
\end{subfigure}
\caption{Labeled crossings}
\label{fig:labeledcrossings}
\end{figure}

For positive even crossings, as shown in figure \ref{fig:positivecrossing}, the relationships are
\begin{align}\label{eqn:positiveevenrelations}
z&=xysx^{-1}s^{-1}, &
w&=sxs^{-1}.
\end{align}

For negative even crossings, as shown in figure \ref{fig:negativecrossing}, the relationships are
\begin{align}\label{eqn:negativeevenrelations}
z&=s^{-1}ys, &
w&=s^{-1}y^{-1} sxy.
\end{align}

The relationship associated to odd crossings are independent of the crossing type. Using the labels in
figure \ref{fig:labeledcrossings}, the odd crossings have the relationship
\begin{align}\label{eqn:oddrelations}
z&= \theta^{-1} y \theta, &
w &= \theta x \theta^{-1}.
\end{align}

The virtual crossings have  the relations
\begin{align}\label{eqn:virtualrelations}
z&= q^{-1} y q, &
w&= q x q^{-1}.
\end{align}

\begin{theorem} For all virtual knot diagrams $K$ and $\hat{K}$, if $K$ is related to $\hat{K}$ by a sequence of diagrammatic moves then  $PG_{K} $ is isomorphic to $PG_{\hat{K}}$.
\end{theorem}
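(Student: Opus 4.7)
The plan is to verify invariance move by move using Tietze transformations. Each diagrammatic move is local: the generators and relations of $PG_K$ and $PG_{\hat K}$ outside a small disk around the move coincide, so it suffices to show that the internal arc-generators can be eliminated by the local crossing-relations on either side to produce a common presentation on the boundary arcs. Two structural features will be used throughout: the commutator relations $[s,q]=[s,\theta]=[q,\theta]=1$ let the three auxiliary generators slide past one another (though none of them commutes with the arc-generators $a_i$), and the parities of classical crossings behave predictably under the Reidemeister moves, so that the relations drawn from (\ref{eqn:positiveevenrelations})--(\ref{eqn:oddrelations}) on the two sides of a move involve compatible combinations of $s$'s and $\theta$'s.

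The virtual moves VR1, VR2, and VR3 involve only conjugation by $q$ via (\ref{eqn:virtualrelations}) and reduce to the identity $qq^{-1}=1$ together with the standard threefold-conjugation identity. For the mixed move VR4, a classical crossing passes through a virtual crossing; after conjugating the classical relations by $q$, the centrality of $q$ relative to $s$ and $\theta$ lets $q^{\pm 1}$ slide through the conjugating factors and recombine to the expected boundary arcs, regardless of whether the classical crossing is positive even, negative even, or odd. The classical R1 move creates a single even crossing whose two relations collapse to a tautology on the through-strand once the loop closure identifies the pair of arcs emerging from the interior of the loop. R2 creates two crossings of equal parity, so the two relations applied in sequence give the identity on the through-strands after cancellation of the paired $s^{\pm 1}$ or $\theta^{\pm 1}$ conjugations.

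The R3 move is handled by computing, on each side, the word in $a_i$, $s^{\pm 1}$, $\theta^{\pm 1}$ that each of the three outgoing arcs equals in terms of the three incoming arcs, and checking equality modulo the defining relations. The main obstacle is this verification when the three R3 crossings admit a mixture of even and odd parities: the resulting words are arc-generators conjugated by interleaved powers of $s$ and $\theta$, which the relation $[s,\theta]=1$ permits us to collect, reducing the check to a finite list of free-group identities in the boundary arcs. Performing this calculation for each admissible parity assignment at an R3 triangle completes the proof.
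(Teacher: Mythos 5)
Your proposal is correct and follows essentially the same route as the paper: a move-by-move local verification in which the relations at each crossing eliminate the interior arc generators, with the commutators $[s,\theta]$, $[s,q]$, and $[q,\theta]$ doing exactly the work you assign them in the mixed-parity Reidemeister III and the mixed classical/virtual move. The one fact worth stating explicitly in your R3 step is the parity constraint the paper records outright---a Reidemeister III triangle contains either zero or exactly two odd crossings---which is what reduces your ``admissible parity assignments'' to the short list actually checked.
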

\begin{proof}
We show invariance under  diagrammatic that include  odd crossings.  Invariance under moves involving only even crossings or even and virtual crossings are shown in BDGGHN \cite{boden}. The case of a Reidemeister III move involving even and odd crossings is analogous to the case of a virtual Reidemeister IV move that includes an even crossing.

\begin{figure}
\[ \begin{array}{c} \scalebox{0.5}{\fontsize{204pt}{20pt}\selectfont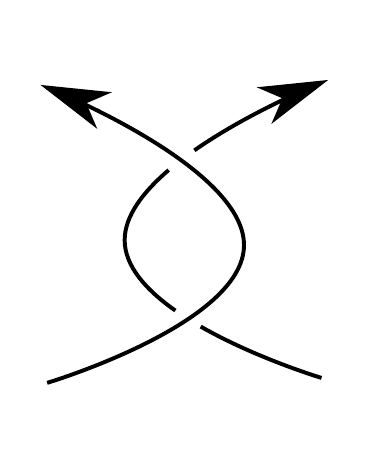} \end{array} \leftrightarrow
\begin{array}{c} \scalebox{0.5}{\fontsize{204pt}{20pt}\selectfont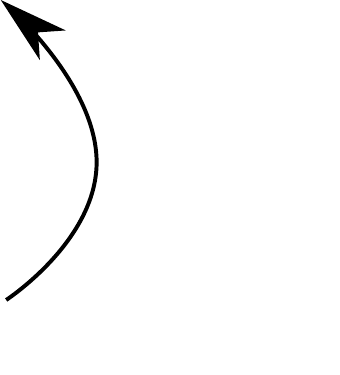} \end{array} \]
\caption{Labeled Reidemeister II}
\label{fig:r2movelabeled}
\end{figure}

We begin by examining a Reidemeister II move as shown in
figure \ref{fig:r2movelabeled}. In a Reidemeister II move, both crossings are either even or odd.
We prove that  $a=e$ and $b=f$. For even crossings, we use equations \ref{eqn:positiveevenrelations} and \ref{eqn:negativeevenrelations}. The diagram on the left hand side of the figure determines the relations
\begin{align*}
c&=absa^{-1}s^{-1}, & d &=sas^{-1}, \\
e&=s^{-1}d s, & f &=s^{-1}d^{-1}s c d.
\end{align*}
Reducing these relations, we see that $a=e$ and $b=f$.
If both crossings are odd, we use equations \ref{eqn:oddrelations} and the relations from the left hand side of the figure are
\begin{align*}
c&= \theta^{-1} b \theta,  & d&=\theta a \theta^{-1}, \\
e&= \theta^{-1} d \theta, & f &= \theta c \theta^{-1}.
\end{align*}
Again, $a=e$ and $b=f$ after rewriting. The virtual Reidemeister II case is analogous to the odd classical case.

\begin{figure}
\[ \begin{array}{c} \scalebox{0.5}{\fontsize{204pt}{20pt}\selectfont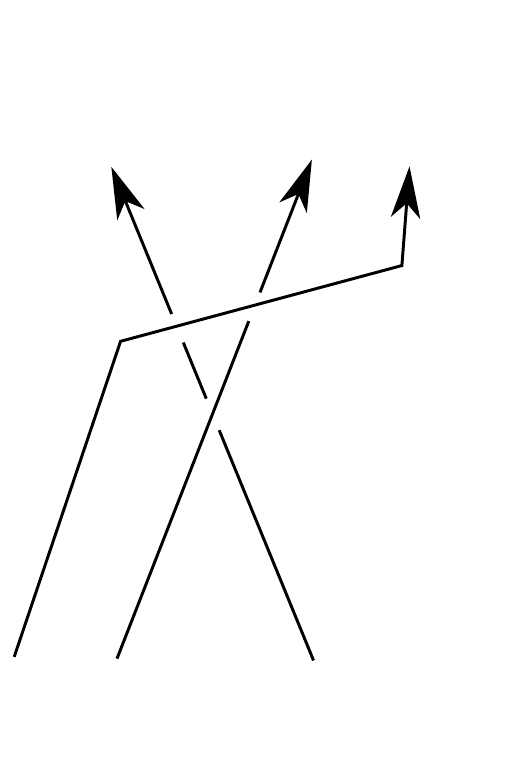} \end{array} \leftrightarrow
\begin{array}{c} \scalebox{0.5}{\fontsize{204pt}{20pt}\selectfont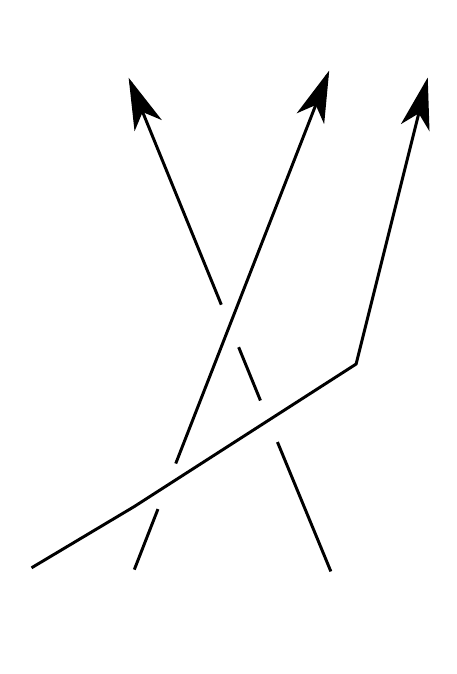} \end{array} \]
\caption{Labeled Reidemeister III}
\label{fig:r3movelabeled}
\end{figure}

We now consider the Reidemeister III move. Note that in a Reidemeister III move involving both even and odd crossings, the move must contain two odd crossings.
The odd crossing relationship is independent of crossing sign, so we only need to consider the case shown in figure \ref{fig:r3movelabeled}. We assume that the two crossings in the over passing strand are odd crossings. Using equations  \ref{eqn:positiveevenrelations}, and \ref{eqn:oddrelations}, we obtain from the right hand side
\begin{align*}
a&= yzsy^{-1} s^{-1}, & b &= sys^{-1}, \\
f&= \theta x \theta^{-1}, & c&= \theta^{-1} a \theta, \\
e&= \theta f \theta^{-1}, & d &= \theta^{-1} b \theta.
\end{align*}
Consequently,
\begin{align*}
e& = \theta^2 x \theta^{-2}, & d&= \theta^{-1} s y s^{-1} \theta^{-1},
& c&= \theta^{-1} y z s y^{-1} s^{-1} \theta .
\end{align*}

From the left hand side,
we obtain
\begin{align*}
f&= \theta x \theta^{-1}, & a&= \theta^{-1} y \theta, \\
b&= \theta^{-1} z \theta, &  e&= \theta f \theta^{-1}, \\
c&=absa^{-1} s^{-1}, & d&=sas^{-1}.
\end{align*}
We reduce these relations to obtain
\begin{align*}
e&= \theta^2 x \theta^{-2},
&  d &= s \theta^{-1} y \theta s^{-1},
& c&= \theta^{-1} y z s y^{-1} \theta s^{-1}.
\end{align*}
The two diagrams produce equivalent relations after adding the commutator $[s, \theta] = 1$ . The other two cases where the even crossing is included in the overpassing strand follows similarly.
The commutators $[s, q]=1$ and $[\theta, q] = 1 $ are added to ensure invariance under the virtual Reidemeister III move  (with either even or odd crossings). The defined group is an invariant of a virtual knot.
\end{proof}

\section{The Virtual Parity Alexander Module}

In this section, we construct the virtual parity Alexander module.
From a knot diagram with $n$ crossings, we obtain $2n+3$ generators and $2n+3$ relations.
The group $PG_K$ is a quotient of $F_{2n+3}$:
\begin{equation*}
PG_K = \lbrace a_1, a_2 \ldots a_{2n}, s,q, \theta \vert  r_1, r_2, \ldots r_{2n+3} \rbrace
\end{equation*}
where
\begin{align} \label{eqn:commutators}
r_{2n+1}&= [s,q], & r_{2n+2} &=[s, \theta],  & r_{2n+3}&= [\theta, q].
\end{align}

Two equivalent knot diagrams produce different but isomorphic presentations of the same group. One presentation can be transformed into the other by following the sequence of Reidemeister moves relating the two diagrams. However, since it is known that such a sequence of exchanges exists then the Tietze Transformation theorem \cite{Tietze} can also be applied. The Tietze theorem specifies that the transformation can be achieved through exactly two types of transformations: 1) a consequence of existing relations and 2) the introduction of a new generator $x$ which is equated with an existing word $w$ to form the relation $x^{-1}w =1 $.

We review Fox's free differentials. The differentials
 linearize the relations in a multiplicative group to produce a system of homogeneous linear equations.
The free group on the $m$ elements is denoted $F_m$ and the group ring is denoted as $ \mathbb{Z} [F_{m}]$.
Fox's free differentials are a set of maps $\frac{ \partial }{\partial x_j} 	: \mathbb{Z} [ F_n] \rightarrow \mathbb{Z} [F_n]$ with the following properties:
\begin{equation}
\frac{ \partial x_i}{\partial x_j} = \begin{cases}
											1 & i= j \\
											0 & i \neq j
											\end{cases} 					
\end{equation}
and
\begin{equation}
\frac{ \partial aw}{\partial x_j} 	= \frac{ \partial a}{\partial x_j} 	+ a \frac{ \partial w}{\partial x_j} .
\end{equation}

Fox's Fundamental Identity establishes a relationship between an element $w$ of $F_m$ and the differentials:
\begin{equation} \label{eqn:foxid}
w-1 = \sum \frac{ \partial w }{\partial a_j} (a_j -1).
\end{equation}

The  canonical homomorphism from $F_{2n+3} $ to $PG_K$ induces a homomorphism from $ \mathbb{Z} [F_{2n+3}] $ to $ \mathbb{Z}[PG_K]$. The free differential maps are applied to the words $r_{ij}$ determined by the crossings in the diagram of $K$. The induced map  from $ \mathbb{Z} [F_{2n+3}] $ to $ \mathbb{Z}[PG_K]$ and equation \ref{eqn:foxid}
results in  a system of  linear equations
\begin{equation} \label{eqn:appoffox}
 \frac{ \partial r_{i}}{ \partial \theta} (\theta -1) + \frac{ \partial r_{i}}{ \partial q} (q -1)   +  \frac{ \partial r_{i}}{ \partial s} (s -1) + \sum_{j=1} ^{2n} \frac{ \partial r_{i}}{ \partial a_j} (a_j -1)  = r_{i} -1 .
\end{equation}

In  $\mathbb{Z}[PG_K]$, $r_{i} -1 =0 $ so that equation \ref{eqn:appoffox} becomes
\begin{equation*}
 \frac{ \partial r_{i}}{ \partial \theta} (\theta -1) + \frac{ \partial r_{i}}{ \partial q} (q -1)   +  \frac{ \partial r_{i}}{ \partial s} (s -1) + \sum_{j=1} ^{2n} \frac{ \partial r_{i}}{ \partial a_j} (a_j -1)  = 0.
\end{equation*}

Now, mapping  $F_{2n+3}$ into the group ring $\mathbb{Z}[t,q,s,\theta]$ by sending $a_i$ to $t$ for $1 \leq i \leq 2n$ and the remaining generators to $q,s,$ and $\theta$ induces a corresponding quotient of $ \mathbb{Z}[PG_k]$.

From the group presentation, we obtain a system of homogeneous linear equations with coefficients in
$ \mathbb{Z} [t,q,s,\theta]$. If two groups are isomorphic, then the two systems of linear equations have equivalent solution sets.

Let $M$ denote the matrix corresponding to the system of equations.
Observe that $M \bar{x} = \bar{0}$ has the trivial solution $ \bar{0}$ and the solution $ \bar{1} $. This implies that the determinant of $M$ is zero. Additional solutions to the system of equations are found in quotients of $ \mathbb{Z}[t,q,s, \theta]$; solutions are multiples of the greatest common divisor (gcd) of the minors of $M$.

Denote the gcd of the minors as $ \Delta_1 (K)$ and additional solutions are in
$  \mathbb{Z}[t,q,s, \theta] / \Delta_1 (K)$. Let $\Delta_{r} (K)$ denote the gcd of the minors of rank $n-r$;  $ \Delta_r (K)$ is a divisor of $ \Delta_{r-1} (K)$ via the definition of determinant using a co-factor expansion. This produces a sequence of ascending ideals associated to the knot $K$:
\begin{equation}
(\Delta_1 (K) ) \subset (\Delta_2 (K) ) \ldots ( \Delta_n (K)) \subset (1).
\end{equation}

The determinants of row equivalent matrices have a well understood relationship and  two row equivalent matrices produce determinants that will differ only by scalars in $ \mathbb{Z} [s,t,q, \theta]$.

We construct the $(2n+3) \times (2n+3) $ matrix $M$ for $PG_K$.
The relation $ r_{2n+1}$ is $[s,q] = 1$, we obtain the linearized relations
\begin{gather*}
\frac{\partial r_{2n+1}}{\partial a_i} = 0 \text{ for } i \in 1,2, \ldots 2n, \\
\frac{\partial r_{2n+1}}{\partial s} = 1-q,  \\
\frac{\partial r_{2n+1}}{\partial q} = s-1, \\
\frac{\partial r_{2n+1}}{\partial \theta} = 0.
\end{gather*}

The relations are computed similarly for $r_{2n+2}$ and $r_{2n+3}$ from equation \ref{eqn:commutators}, so that

\begin{equation}
M = \begin{bmatrix} A & ( \frac{\partial r_i }{ \partial s} ) & ( \frac{\partial r_i }{ \partial q} ) &
( \frac{\partial r_i }{ \partial \theta}  ) \\
0 & 1-q & s-1 & 0 \\
0 & 1- \theta & 0 & s-1 \\
0 & 0 & \theta -1 & 1-q \end{bmatrix}
\end{equation}
where the entries in $A$ have the form $ \frac{ \partial r_i }{ \partial a_j } $ and
$ ( \frac{ \partial r_i }{ \partial * }) $ is column vector of length $2n$. (Recall that two matrices obtained from groups related by the Tietze transformation theorem are related by row reduction and (possibly) the introduction or removal of a dependent column.)
The determinant of $M$ is zero by application of Fox's Fundamental Identity.

The \textit{parity virtual Alexander polynomial}, denoted $\Phi \Delta_K (s,t,q,\theta) $, is defined to be the greatest common divisor of the minors of $M$.
\begin{theorem} The parity virtual Alexander polynomial, $\Phi \Delta_K (s,t,q,\theta) $, is an invariant of the virtual knot $K$. \end{theorem}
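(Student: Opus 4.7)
The plan is to combine Theorem 1 with the classical fact that the elementary ideals of a Fox Jacobian matrix depend only on the isomorphism class of the underlying group presentation. Because Theorem 1 gives $PG_K \cong PG_{\hat K}$ whenever $K$ and $\hat K$ are related by a diagrammatic move, it suffices to show that $\Phi\Delta_K$ is unchanged when one presentation of $PG_K$ is replaced by another. By the Tietze Transformation Theorem (already invoked in the paper), any such change is a finite composition of two types of moves: (T1) inserting or deleting a relation that is a consequence of the other relations, and (T2) introducing (or removing) a generator $x$ together with a relation $x^{-1}w = 1$, where $w$ is a word in the remaining generators.

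For (T1), I would decompose the move into Nielsen-style elementary modifications $r_i \leftrightarrow r_i r_j^{\pm 1}$, $r_i \leftrightarrow g r_i g^{-1}$, and $r_i \leftrightarrow r_i^{-1}$. By the product and inverse rules for Fox differentials, each of these corresponds to an elementary row operation on $M$ over $\mathbb{Z}[PG_K]$, and these remain elementary row operations after specializing into the commutative ring $R = \mathbb{Z}[t,q,s,\theta]$. Since $R$ is a UFD, the ideal generated by the $k \times k$ minors of $M$, and hence its gcd generator, is unaffected.

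For (T2), adjoining a generator $x$ with defining relation $x^{-1}w = 1$ appends one column (the Fox derivative in $x$) and one row (the linearized new relation) to $M$. No preexisting relation involves $x$, so the new column is zero above the new row; the new row has a unit in the new column and entries $-\partial w/\partial g_j$ elsewhere. A Laplace expansion along this row/column shows that the ideal of $k \times k$ minors of the enlarged matrix equals the ideal of $(k-1) \times (k-1)$ minors of the original. Combined with the simultaneous increase of the matrix size by one, the codimension-one elementary ideal, which is what defines $\Phi\Delta_K$, is preserved up to units.

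The main obstacle I expect is the bookkeeping around (T2): verifying that the indexing of the elementary ideals shifts compatibly with the enlargement of $M$, so that the same invariant $\Phi\Delta_K$ is extracted from both presentations; and confirming that the specialization $a_i \mapsto t$ extends consistently whenever a T2 move introduces a new arc generator (which is forced because $x^{-1}w$ abelianizes to an identity already satisfied in $R$ once $x$ is assigned its inherited image). Once these are in place, invariance of $\Phi\Delta_K$ under all diagrammatic moves follows from Theorem 1 together with the Tietze argument above.
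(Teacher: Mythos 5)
Your proposal is correct and takes essentially the same approach as the paper: the paper's proof is a one-line appeal to the preceding discussion, which combines Theorem 1 with the Tietze Transformation Theorem and the observation that the resulting matrices are related by row operations and the introduction or removal of a dependent row and column, so the gcd of the minors is unchanged up to units. Your write-up merely fills in the details the paper leaves implicit (Nielsen moves realized as elementary row operations for T1, the Laplace expansion and index shift for T2, and the compatibility of the specialization $a_i \mapsto t$ with new generators).
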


\begin{proof} This follows from previous work in the section. \end{proof}

\begin{proposition} The invariant $ \Phi \Delta_K (s,t,q, \theta) = det(A)$. \end{proposition}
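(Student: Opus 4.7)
The plan is to combine the block decomposition $M = \begin{pmatrix} A & B \\ 0 & C \end{pmatrix}$ (with $A$ the $2n\times 2n$ arc block, $B$ the $2n\times 3$ block of partials with respect to $s,q,\theta$, and $C$ the explicit $3\times 3$ commutator block) with the Fox identity to reduce everything to a gcd computation on the three nonzero columns of $\mathrm{adj}(M)$. First I would verify by direct row manipulation that $\det(C) = 0$ and that $C\cdot (s-1,q-1,\theta-1)^{T} = 0$. This immediately disposes of three of the four types of $(2n+2)\times(2n+2)$ minors of $M$: deleting an arc row together with an arc column yields a block lower-triangular matrix with determinant $\det(\hat A)\det(C)=0$; deleting an arc row together with a commutator column leaves only two nonzero columns among the bottom three rows, so Laplace expansion along them forces the minor to vanish; and deleting a commutator row together with a commutator column produces a block lower-triangular matrix whose determinant is $\pm\det(A)\cdot m$, where $m$ is a $2\times 2$ minor of the reduced $C$.

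The remaining case (delete a commutator row and an arc column) is best handled indirectly. Fox's fundamental identity supplies the column relation $Mv=0$, where $v = ((t-1),\ldots,(t-1),s-1,q-1,\theta-1)^{T}$, so assuming $\det(A)\neq 0$ the rank of $M$ equals $2n+2$ and $\ker(M)=\langle v\rangle$. From $M\cdot\mathrm{adj}(M)=0$ each column of $\mathrm{adj}(M)$ is a scalar multiple $c_{k}v$. The vanishing established in the first two easy cases already forces $c_{k}=0$ for $k\leq 2n$, and reading off a single previously computed entry for each $k\in\{2n+1,2n+2,2n+3\}$ identifies $c_{2n+1}=\pm(\theta-1)\det(A)$, $c_{2n+2}=\pm(q-1)\det(A)$, $c_{2n+3}=\pm(s-1)\det(A)$.

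Every nonzero $(2n+2)\times(2n+2)$ minor of $M$ is then, up to sign, of the form $c_{k}v_{j}$ for some $k\in\{2n+1,2n+2,2n+3\}$ and $j\in\{1,\ldots,2n+3\}$. Since $(t-1),(s-1),(q-1),(\theta-1)$ are pairwise coprime in $\mathbb{Z}[s,t,q,\theta]$, the gcd of a product-set factors, giving
\[
\Phi\Delta_{K} \;=\; \gcd_{k,j}(c_{k}v_{j}) \;=\; \gcd(c_{2n+1},c_{2n+2},c_{2n+3})\cdot\gcd(v_{1},\ldots,v_{2n+3}) \;=\; \det(A)\cdot 1 \;=\; \det(A).
\]
The degenerate case $\det(A)=0$ is handled by noting that $\ker(M)$ then has dimension at least two, so all $(2n+2)\times(2n+2)$ minors vanish and the identity still holds. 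The principal obstacle is the ``commutator row, arc column'' case, which I would not expand by hand: the adjugate argument avoids a combinatorially ugly sum of terms $\det[A_{\hat j}\,|\,B_{\gamma}]$ times $2\times 2$ minors of a reduced $C$, replacing it with the clean statement that every such minor is a multiple of $\det(A)$ by the scalar $c_{k}$.
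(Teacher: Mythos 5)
Your argument is correct, and it reaches the same conclusion as the paper by a genuinely different route at the one hard step. The paper handles the minors $M_{2n+3,k}$ (commutator row deleted, arc column deleted) head-on: it cofactor-expands along the two remaining commutator rows and then uses the Fox identity as an explicit column operation, replacing the combination $(1-\theta)(s-1)\frac{\partial r_i}{\partial\theta}+(s-1)(1-q)\frac{\partial r_i}{\partial q}+(s-1)^2\frac{\partial r_i}{\partial s}$ by $\sum_j A_j(1-t)(s-1)$ to land on $\pm(1-t)(s-1)\det(A)$. You instead package the Fox identity as the kernel statement $Mv=0$ and let $M\cdot\mathrm{adj}(M)=0$ do the work: once the arc-row minors are shown to vanish and one diagonal cofactor per commutator row is computed, every remaining minor is forced to be $\pm c_k v_j$ with no further expansion. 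Your version buys three things the paper leaves implicit: it treats all three commutator rows uniformly (the paper only works out row $2n+3$, whose minors all share the factor $s-1$, so the gcd computation genuinely needs the other two rows — your $\gcd(c_{2n+1},c_{2n+2},c_{2n+3})=\det(A)\gcd(\theta-1,q-1,s-1)=\det(A)$ closes that); it makes the final gcd step explicit via $\gcd_{k,j}(c_kv_j)=\gcd_k(c_k)\gcd_j(v_j)$; and it covers the degenerate case $\det(A)=0$, which the paper does not mention. Two small points to tidy: the factorization of the gcd of the product set holds in any UFD by comparing $p$-adic valuations and does not actually require pairwise coprimality of $(t-1),(s-1),(q-1),(\theta-1)$, so your stated justification is stronger than needed; and to assert that each column of $\mathrm{adj}(M)$ is a \emph{polynomial} multiple of $v$ you should note that $c_k$, a priori in the fraction field, lies in the ring because $c_kv_j$ is a polynomial for every $j$ and $\gcd_j(v_j)=1$ (or simply run the valuation argument with possibly negative $v_p(c_k)$, which changes nothing).
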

\begin{proof}
The matrix $M$ has the form:
\begin{equation}
\begin{bmatrix} A & \frac{\partial r_i}{ \partial s} & \frac{ \partial r_i}{\partial q} & \frac{\partial r_i}{\partial \theta} \\
\bar{0} & 1-q & s-1 & 0 \\
\bar{0} & 1- \theta & 0 & s-1 \\
\bar{0} & 0 & \theta -1 & 1-q \end{bmatrix}.
\end{equation}
The submatrix $ A= ( \frac{\partial r_i}{\partial x_j} ) $ is a $ 2n \times 2n $ matrix
and the entries $ \frac{ \partial r_i}{\partial \star } $ are column vectors.
The determinant of the submatrix
\begin{equation*}
\begin{bmatrix} 1-q & s-1 & 0 \\
 1- \theta & 0 & s-1 \\
 0 & \theta -1 & 1-q \end{bmatrix}
\end{equation*}
is  zero. Hence, the determinant of $M$ is zero.
We apply Fox's fundamental identity
\begin{equation} \label{eqn:fox2}
\sum A_j (t-1) + \frac{ \partial r}{\partial s} (s-1) + \frac{ \partial r}{\partial q} (q-1) +
\frac{ \partial r}{\partial \theta} ( \theta -1) =0
\end{equation}
where $A_j$ is the $j$th column vector in $A$ and $\frac{ \partial r_i}{ \partial \star }$ is a column vector to compute the minors of $M$.

We use the notation $M_{k,l}$ to indicate the matrix obtained from $M$ by deleting the $k$th row and the $j$th column.
Delete the last row and column to produce
\begin{equation*}
 M_{2n+3, 2n+3} = \begin{bmatrix} A & \frac{\partial r_i}{ \partial s} & \frac{ \partial r_i}{\partial q}  \\
\bar{0} & 1-q & s-1 \\
\bar{0} & 1- \theta & 0  \\
\end{bmatrix}.
\end{equation*}
Computation shows that $Det(M_{2n+3, 2n+3}) = \pm 1 (1-\theta)(s-1)Det(A)$.
Similarly, $Det(M_{2n+3,  2n+2}) = \pm 1 (s-1) (1-q) Det(A)$ and $Det(M_{2n+3, 2n+1}) = \pm 1 (s-1)^2 Det(A)$.
Now, let $\hat{A}_k$ denote the matrix $A$ with the $k$th column deleted.
Then
\begin{equation} \label{eqn:detcalc}
Det( M_{2n+3, k} )  =
 \begin{vmatrix} \hat{A}_k & \frac{\partial r_i}{ \partial s} & \frac{ \partial r_i}{\partial q} & \frac{\partial r_i}{\partial \theta} \\
\bar{0} & 1-q & s-1 & 0 \\
\bar{0} & 1- \theta & 0 & s-1
 \end{vmatrix} .
\end{equation}
Next, expand and simplify equation \ref{eqn:detcalc}  using equation \ref{eqn:fox2}
\begin{align*}
Det(M_{2n+3, k}) &=  (1- \theta)
 \begin{vmatrix} \hat{A}_k & \frac{\partial r_i}{ \partial q} & \frac{ \partial r_i}{\partial \theta}   \\
\bar{0} & s-1 & 0
 \end{vmatrix}
  +(s-1)
\begin{vmatrix} \hat{A}_k & \frac{\partial r_i}{ \partial s} & \frac{ \partial r_i}{\partial q} &  \\
\bar{0} & 1-q & s-1
\end{vmatrix} \\
&=
 (1- \theta) (s-1) \begin{vmatrix} \hat{A}_k& \frac{ \partial r_i}{\partial \theta} \end{vmatrix}
 + (s-1)(1-q) \begin{vmatrix} \hat{A}_k  & \frac{ \partial r_i}{\partial q} \end{vmatrix}
 + (s-1)^2 \begin{vmatrix} \hat{A}_k  & \frac{ \partial r_i}{\partial s} \end{vmatrix} \\
 &= \begin{vmatrix} \hat{A}_k  &  (1- \theta )(s-1) \frac{ \partial r_i}{\partial \theta}
 + (s-1)(1-q)\frac{ \partial r_i}{\partial q} + (s-1)^2 \frac{ \partial r_i}{\partial s} \end{vmatrix} \\
 &= \begin{vmatrix} \hat{A}_k    &  \sum_{j=1} ^m A_j (1-t)(s-1) \end{vmatrix} \\
 &= \pm 1 Det(A) (1-t)(s-1).
\end{align*}
Based on these computations, deleting any of the first $2n$ columns and one of the last three rows results in a minor with $Det(A)$ as a factor. Deleting any other row and column results in a minor with value zero. The greatest common divisor of the minors of $M$ is $Det(A)$.
 \end{proof}

To simplify notation, we will denote $\Phi \Delta_K (s,t,q, \theta)$ as $ \Phi \Delta (K) $ when we do not need to directly reference the variables.
\begin{example}
We compute the invariant for the the knots shown in figure \ref{fig:virtualknotexamples}. The knot diagrams are listed in Jeremy Green's knot tables \cite{GreenVKTables}.
\begin{figure}
\begin{subfigure}{0.45\linewidth}
\[ \begin{array}{c} \scalebox{0.5}{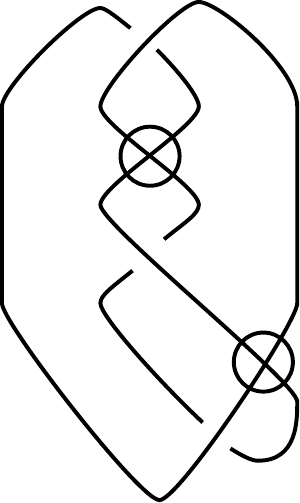} \end{array} \]
\caption{Knot 3.1}
\label{fig:knot3p1}
\end{subfigure}
\begin{subfigure}{0.45\linewidth}
\[ \begin{array}{c} \scalebox{0.5}{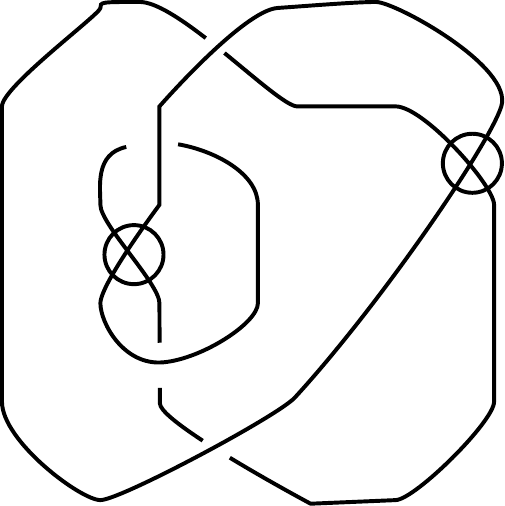} \end{array} \]
\caption{Knot 4.7}
\label{fig:knot4p7}
\end{subfigure} \\
\begin{subfigure}{0.45\linewidth}
\[ \begin{array}{c} \scalebox{0.5} {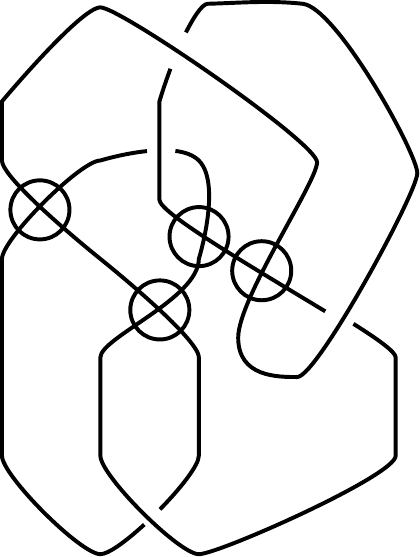} \end{array} \]
\caption{Knot 4.9}
\label{fig:knot4p9}
\end{subfigure}
\begin{subfigure}{0.45\linewidth}
\[ \begin{array}{c} \scalebox{0.5}{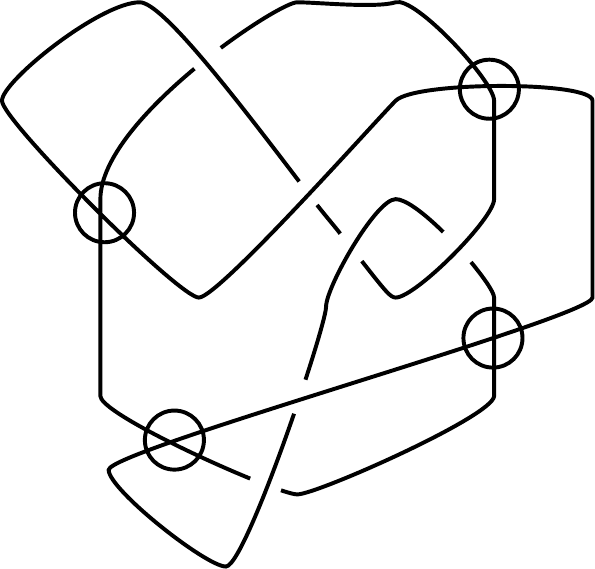} \end{array} \]
\caption{Knot 6.32008}
\label{fig:knot6p32008}
\end{subfigure}
\caption{Virtual knot examples}
\label{fig:virtualknotexamples}
\end{figure}
\begin{align}
\label{eqn:k3p1}
\Phi \Delta(K_{3.1})&=\frac{1}{q} + \frac{q}{st} - \frac{\theta^2}{q} + \frac{\theta^2}{stq} \\
\label{eqn:k4p7}
\Phi \Delta(K_{4.7})&=\frac{1}{q} -q  \\
\label{eqn:k4p9}
\Phi \Delta(K_{4.9})&= -1 + \frac{1}{s^2t^2}	+ \frac{1}{sq} - \frac{1}{s^2 t q}	-\frac{q}{st^2} +\frac{q}{t}   \\
\label{eqn:k6p3}
\Phi \Delta (K_{6.32008})&= 1-\frac{1}{st} + \frac{1}{sq} - \frac{t}{q} -sq + \frac{q}{t} - \frac{q}{\theta} + \frac{stq}{\theta}
\end{align}

\end{example}

We see that the virtual parity Alexander polynomial does not vanish on the knot 6.32008, whereas the parity Alexander polynomial (Kaestner and Kauffman \cite{KaestnerKauffman}) does vanish.

\section{Properties}
We study the properties of the invariant.

\subsection{Lower bounds on crossing numbers}

The virtual Alexander polynomial determines lower bounds on both the number of virtual and odd crossings in the diagram. We follow the methods and definitions of BDGGHN \cite{boden}.  Define the $x-width$ of a polynomial $f(x)$ to be the maximum degree of $x$ in the polynomial minus the minimum degree.
\begin{theorem}
For a virtual knot $K$, the virtual parity Alexander polynomial determines a lower bound on the number of virtual and odd crossings. Let $v$ (respectively $o$) denote the minimum number of virtual  (respectively odd) crossings in any diagram of $K$. Then
\begin{align}
\theta - width \Phi \Delta (K) & \leq 2v \\
q-width \Phi \Delta (K) &  \leq 2o \\
\end{align}
\end{theorem}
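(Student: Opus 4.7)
The plan is to invoke the identification $\Phi\Delta(K) = \det(A)$ from the preceding proposition, then on any chosen diagram $D$ of $K$ bound the $\theta$- and $q$-widths of $\det(A)$ by a direct analysis of the Fox-derivative matrix crossing by crossing. Because $\Phi\Delta(K)$ is a knot invariant, any bound established on a particular diagram holds for $K$; specializing to a diagram attaining the minima $v$ and $o$ then delivers the two inequalities of the statement.

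The first step is to catalogue, from equations (\ref{eqn:positiveevenrelations})--(\ref{eqn:virtualrelations}) and a direct Fox-derivative computation, the exact shape of each row of $A$ contributed by a crossing. After the substitution $a_i \mapsto t$, each crossing contributes two rows to $A$ supported on the four semi-arc columns meeting that crossing; one of the semi-arc entries is $\pm 1$ (the output arc) and the other is a signed single power of the auxiliary generator attached to that crossing type. Thus each such row has width at most one in its attached generator, and each crossing therefore contributes width at most two in its attached generator.

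The second step is the Leibniz expansion $\det(A) = \sum_\sigma \mathrm{sgn}(\sigma) \prod_i A_{i,\sigma(i)}$. The $\theta$- and $q$-degrees of any monomial contribution are additive over the rows, and only the rows coming from the relevant crossing type contribute nonzero degree in a given auxiliary generator. Summing the per-row width bound over the two rows of each relevant crossing in $D$ produces a total width bounded by twice the count of the relevant crossing type; combined with invariance of $\Phi\Delta(K)$, this yields the two inequalities $\theta\text{-width}\,\Phi\Delta(K) \leq 2v$ and $q\text{-width}\,\Phi\Delta(K) \leq 2o$ after minimizing the relevant counts over all diagrams of $K$.

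The main obstacle will be the careful per-row width bookkeeping under both positive and negative crossing signs, and matching each auxiliary generator ($\theta$ or $q$) to the crossing type whose count ($v$ or $o$) is paired with it in the theorem, so that the summation in the Leibniz expansion produces the widths $2v$ and $2o$ as written rather than any looser bound. A secondary care point is to confirm that the $-1$ output-arc entries in each row contribute no spurious $\theta$- or $q$-degree, which follows from their being scalars independent of the auxiliary generators. Once this local per-crossing analysis is pinned down, summing over the crossings of $D$ and invoking invariance completes the proof.
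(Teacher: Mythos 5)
Your proposal is correct and is essentially the paper's own argument, which compresses the whole thing into one sentence: each crossing contributes two rows to the matrix, and each such row carries at most a single power $q^{\pm1}$ (respectively $\theta^{\pm1}$) of the auxiliary generator attached to its crossing type, so the Leibniz expansion bounds the width in that generator by twice the number of crossings of that type, and invariance lets you minimize over diagrams. One point your ``care point'' about matching generators to crossing types will force you to confront: by the defining relations (\ref{eqn:oddrelations}) and (\ref{eqn:virtualrelations}), $\theta$ appears only in odd-crossing rows and $q$ only in virtual-crossing rows, so the bookkeeping actually yields $\theta\text{-width}\,\Phi\Delta(K)\leq 2o$ and $q\text{-width}\,\Phi\Delta(K)\leq 2v$ --- the pairing in the theorem statement is transposed (the worked examples in the paper confirm this is the intended pairing), so you should prove the corrected inequalities rather than the ones as literally written.
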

\begin{proof}
There are two equations associated to an individual crossing. In the construction, each equation contributes a  row contributes either $q$ or $q^{-1}$ (respectively
$\theta $ or $ \theta^{-1}$ ) to the determinant.
\end{proof}

We apply these bounds to the examples.

\begin{example}

The diagram of $K_{3.1}$ contains two odd crossings and two virtual crossings. From the polynomial
in equation \ref{eqn:k3p1},
\begin{align*}
q- \it{width} \Phi \Delta (K_{3.1}) &= 2, &
\theta - \it{width} \Phi \Delta(K_{3.1}) &=2.
\end{align*}
The polynomial determines a lower bound of $1$ for both the odd and virtual crossings.

The diagram of $K_{4.7} $ contains four odd crossings and two virtual crossings; from equation \ref{eqn:k4p7}
\begin{align*}
q- \it{width} \Phi \Delta (K_{4.7}) &= 2, &
\theta - \it{width} \Phi \Delta (K_{4.7}) &=0.
\end{align*}
This gives a lower bound of $1$ on the virtual crossings and $0$ on the odd crossings.

The diagram of $K_{4.9} $ contains two odd crossings and four virtual crossings; from equation \ref{eqn:k4p9}
\begin{align*}
q- \it{width} \Phi \Delta (K_{4.9}) &= 2, &
\theta - \it{width} \Phi \Delta (K_{4.9}) &=0.
\end{align*}
This gives a lower bound of $1$ on the virtual crossings and $0$ on the odd crossings.

The diagram of $K_{6.32008} $ contains three odd crossings and four virtual crossings;
from equation \ref{eqn:k6p3}
\begin{align*}
q- \it{width} \Phi \Delta (6.32008) &= 2, &
\theta - \it{width} \Phi \Delta (K_{6.32008}) &=1.
\end{align*}
This gives a lower bound of $1$ on the virtual crossings and $1$ on the odd crossings.

\end{example}

\subsection{Skein relations}

Let $K_+$ denote a knot diagram with a positive crossing and let $K_-$ denote the knot diagram obtained from $K_+$ by replacing the selected positive crossing with a negative crossing.
Let $K_v$ denote the knot diagram with the selected crossing replaced by a vertical smoothing.  We first consider even crossings.
\begin{theorem} For a knot diagram $K_+$, with a even, positive crossing, $\Phi \Delta (K_+) - \Phi \Delta (K_-) = (1-st) \Phi \Delta(K_v)$ \end{theorem}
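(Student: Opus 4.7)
The plan is to use the identification $\Phi\Delta(K) = \det(A)$ from the previous proposition and compare the three Alexander matrices $A_+$, $A_-$, and $A_v$ by a local analysis at the selected crossing. Label the four semi-arcs meeting at the distinguished positive even crossing by $z, w, x, y$ as in Figure~\ref{fig:positivecrossing}, and note that the two rows of $A_\pm$ contributed by this crossing are supported only in the columns $\{z, w, x, y\}$. Fox calculus applied to the positive even relations $z = xysx^{-1}s^{-1}$ and $w = sxs^{-1}$, together with the abelianization $a_j \mapsto t$, yields the $2 \times 4$ block
\[
B_+ = \begin{pmatrix} -t^{-1} & 0 & t^{-1}-s & 1 \\ 0 & -t^{-1} & t^{-1}s & 0 \end{pmatrix},
\]
and the analogous $B_-$ is computed from the negative relations $z = s^{-1}ys$, $w = s^{-1}y^{-1}sxy$. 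Outside these two rows, $A_+$ and $A_-$ agree.

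The key observation is that $B_+ - B_-$ is of rank one and factors as $(1-st)\cdot V$, where the two nonzero rows of $V$ are proportional (the second row equals $-t^{-1}$ times the first). Consequently $A_+ - A_- = (1-st)\, u\, v^\top$ for an explicit column vector $u$ supported on rows $\{1,2\}$ and row vector $v$ supported on columns $\{x, y\}$. Applying the rank-one determinant update identity then expresses $\det A_+ - \det A_-$ as $(1-st)$ times a linear combination of four minors $M_{ij}(A_-)$ with $i \in \{1, 2\}$ and $j \in \{3, 4\}$, where $M_{ij}(A)$ denotes the minor of $A$ obtained by deleting row $i$ and column $j$.

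To identify this linear combination with $\det A_v$, I would introduce the auxiliary $2n \times 2n$ matrix $A_v^{\mathrm{ext}}$ obtained from $A_\pm$ by replacing the two crossing rows with the trivial smoothing rows encoding $z = x$ and $w = y$. Performing the column operations $C_x \mapsto C_x + C_z$ and $C_y \mapsto C_y + C_w$ on $A_v^{\mathrm{ext}}$ and then Laplace-expanding along the top two rows shows $\det A_v^{\mathrm{ext}} = t^{-2}\det A_v$, since after the column operation the upper-left $2\times 2$ block becomes $-t^{-1}I_{2}$ and the complementary $(2n-2) \times (2n-2)$ minor is exactly $A_v$. Both $A_+ - A_v^{\mathrm{ext}}$ and $A_- - A_v^{\mathrm{ext}}$ are themselves rank-one updates of $A_v^{\mathrm{ext}}$ with the same column vector $u$, so the rank-one formula expresses $\det A_\pm - \det A_v^{\mathrm{ext}}$ as analogous signed sums of minors $M_{ij}(A_v^{\mathrm{ext}})$. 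Subtracting yields $\det A_+ - \det A_-$ as $(1-st)$ times an explicit combination of minors of $A_v^{\mathrm{ext}}$, each of which evaluates via further Laplace expansion along the remaining smoothing row to a signed sum of $(2n-2) \times (2n-2)$ minors of the common lower block.

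The main obstacle is the final bookkeeping: verifying that the resulting combination of lower-block minors collapses to $t^{2}\det A_v^{\mathrm{ext}} = \det A_v$. This reduction requires careful sign tracking together with the systematic application of Fox's fundamental identity to the non-crossing rows of $A_\pm$, which supplies the column-sum relations needed to match coefficients. The argument mirrors the cofactor computation used in the proof of the earlier proposition $\Phi\Delta(K) = \det A$, and parallels standard skein-relation proofs for the classical Alexander polynomial.
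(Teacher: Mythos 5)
Your overall strategy is the same as the paper's: put the two rows coming from the selected crossing at the top, observe that everything outside those rows is common to $K_+$, $K_-$, $K_v$, and compare the expansions of the three determinants in terms of the complementary minors. Your rank-one-update formulation is just a repackaging of the paper's generalized Laplace expansion along the first two rows, and your factorization $B_+-B_-=(1-st)\,u v^{\top}$ is correct. The problem is that you stop exactly where the proof has to happen. The rank-one identity gives $\det A_+-\det A_-=(1-st)\,v^{\top}\mathrm{adj}(A_-)u$ for free; the entire content of the theorem is the claim that this cofactor combination equals $\det A_v$, and you defer that to ``final bookkeeping'' without doing it. As written, the proposal is a plan, not a proof.

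Moreover, if you carry out that bookkeeping you will find it does not close for the statement as written. Writing $A_{ij}$ for the common minor obtained by deleting the two crossing rows and columns $i,j$, the expansions of the three determinants along the crossing rows give
\[
\det A_+ - \det A_- = (1-st)\Bigl[A_{12}+\tfrac{1}{s}A_{13}+\tfrac{1+st}{st}A_{23}+\tfrac{1}{t}A_{24}+\tfrac{1}{st}A_{34}\Bigr],
\qquad
\det A_v = A_{12}-A_{14}+A_{23}+A_{34},
\]
and these do not agree (the $A_{13},A_{14},A_{24}$ coefficients already mismatch, and the minors satisfy no relation forcing agreement). What the expansion actually yields is the weighted relation $\det A_+ - st\,\det A_- = (1-st)\det A_v$, which is precisely the identity appearing in the last displayed line of the paper's own proof (``We observe that $\Phi\Delta(K_+)-st\,\Phi\Delta(K_-)=\cdots$''), in tension with the unweighted statement of the theorem. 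Rescaling the crossing rows by units (your $-t^{-1}$ normalization versus the paper's) multiplies all three determinants by the same global unit and does not remove the $st$. So the missing step in your argument is not mere sign-chasing: the identity you are trying to verify at the end is false in the form claimed, and your method, executed faithfully, proves the $st$-weighted skein relation instead.
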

\begin{proof} We construct the matrices so that entries corresponding to the selected crossing are in the first two rows of the matrix. The $*$ entries in the matrix for all three diagrams and sub-matrices obtained by deleting the first two rows are identical.  Let
$A_{ij}$ denote the matrix obtained by deleting the first two rows and columns $i$ and $j$.
The matrix obtained from $K_+$ is
\begin{equation*}
A = \begin{bmatrix} \begin{matrix}
0 & 1-st & t & -1  \\
-1 & s & 0 & 0 \\ * & * & * & *   \end{matrix}  &
\begin{matrix} * \\  * \\
  *  \end{matrix}
\end{bmatrix}
\end{equation*}
where the relation for $z$ is in the first row and the expression for $w$ is in the second.
Then $\Phi \Delta(K_+) = (1-st)A_{12} - t A_{13} - A_{14} - st A_{23} - s A_{24}$.
For $K_-$, the matrix obtained is
\begin{equation*}
\begin{bmatrix}
\begin{matrix} 0 & 0 & \frac{1}{s}  & -1  \\
-1 & \frac{1}{t} & ( - \frac{1}{st} +1)  & 0 \\ * & * & * & * \end{matrix} &
\begin{matrix} *  \\
* \\ * \end{matrix}
\end{bmatrix}
\end{equation*}
with $ \Phi \Delta (K_-)= - \frac{1}{s} A_{13}-  A_{14} - \frac{1}{st} A_{23}  - \frac{1}{t} A_{24} + (1-\frac{1}{st}) A_{34}$.
Note that the expressions describing $w$ and $z$ are assigned to the same rows.

In $K_v$, we replace the equations by equating the labels on the strands and obtain
From the matrix
\begin{equation*}
\begin{bmatrix} \begin{matrix}
0 & 1 & 0  & -1  \\
-1 & 0 & 1  & 0 \\
* & * & * & *  \end{matrix} &
\begin{matrix} * \\ * \\ * \end{matrix}
\end{bmatrix}
\end{equation*}
Now $ \Phi \Delta (K_v) = A_{12} - A_{14} + A_{23} - A_{34} $.
We observe that
$ \Phi \Delta (K_+)- st \Phi \Delta (K_-)=(1-st )(A_{12} -   A_{14} +  A_{23} + A_{34} )$.
\end{proof}

We now consider odd crossing.

\begin{theorem} For an odd crossing, $\Phi \Delta (K_+) - \Phi \Delta (K_-) =0$. \end{theorem}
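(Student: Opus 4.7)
The key observation is already built into the definition of the virtual parity group: equation \ref{eqn:oddrelations} gives the crossing relations $z = \theta^{-1} y \theta$ and $w = \theta x \theta^{-1}$ \emph{independent of the sign} of the odd crossing. So my plan is essentially to observe that changing a positive odd crossing into a negative odd crossing does nothing at all at the level of the group presentation, and hence nothing at the level of the Alexander module.

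More precisely, I would proceed as follows. Set up the matrix $A$ used to compute $\Phi \Delta(K_+)$ exactly as in the previous theorem, arranging so that the two rows coming from the distinguished odd crossing are the first two rows of $A$. Since the presentation relations $r_{i1}: z w^{-1}\! \theta x \theta^{-1}$-style and $r_{i2}: w \theta x^{-1} \theta^{-1}$-style only involve $x,y,z,w,\theta$ (and not $s$ or the sign), the Fox derivatives $\partial r_{ij}/\partial a_k$ and $\partial r_{ij}/\partial \theta$ evaluated in the abelianization $a_k \mapsto t$, $\theta \mapsto \theta$ are identical for $K_+$ and $K_-$. All other rows of $A$ (coming from crossings away from the selected one) are, by construction, literally the same in the two presentations, since the semi-arc labels outside of a small neighborhood of the crossing are unchanged.

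Therefore the matrices $A(K_+)$ and $A(K_-)$ agree entry by entry. By the previous proposition, $\Phi \Delta(K) = \det A$, so $\Phi \Delta(K_+) = \Phi \Delta(K_-)$ and the difference vanishes.

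There is no real obstacle here; the entire content of the statement is that the parity group construction was deliberately designed to conjugate by $\theta$ at \emph{every} odd crossing regardless of sign, so the invariant cannot see a crossing-change performed at an odd crossing. The only thing to be careful about is that the two rows of the matrix really are written in a sign-independent way; this is immediate from equation \ref{eqn:oddrelations} and the fact that the labels $z,w$ are assigned to the outgoing arcs in the same positions in both pictures of figure \ref{fig:labeledcrossings}.
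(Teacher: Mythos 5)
Your proposal is correct and is exactly the paper's argument: the odd-crossing relations in equation \ref{eqn:oddrelations} are independent of the crossing sign, so $K_+$ and $K_-$ yield identical presentations, identical matrices, and hence identical polynomials. The paper states this in one line (``the relationships determined by both diagrams are identical''); you have simply spelled out the same observation at the level of the matrix entries.
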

\begin{proof}
The relationships determined by both diagrams are identical.
\end{proof}

For a virtual knot $K$, a \textit{degree one odd Vassiliev invariant}, denoted $ v(k)$, is a virtual knot invariant that satisfies the condition $v(K_+) - v(K_-)=0 $ when the selected crossing is odd and
$v(K_+) - v(K_-) \neq 0 $ when the crossing is even. The virtual parity Alexander polynomial is an odd degree one Vassiliev invariant.

\begin{corollary} For a non-classical knot diagram $K$ with $ \Phi \Delta (K) \neq 0$,  if the knot diagram $K'$ obtained by switching the odd crossings from positive to negative (or vice versa) then $K'$ is non-trivial.  \end{corollary}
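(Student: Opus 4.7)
The plan is to invoke the preceding odd-crossing theorem iteratively. That theorem establishes $\Phi\Delta(K_+)-\Phi\Delta(K_-)=0$ whenever the crossing being switched is odd, so a single odd-crossing switch leaves $\Phi\Delta$ unchanged. Since $K'$ is obtained from $K$ by a finite sequence of such switches (one per odd crossing), a straightforward induction on the number of switches yields
\[
\Phi\Delta(K')=\Phi\Delta(K).
\]

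Combining this equality with the hypothesis $\Phi\Delta(K)\neq 0$ gives $\Phi\Delta(K')\neq 0$. Because $\Phi\Delta$ is a virtual knot invariant by Theorem~2, and because the unknot satisfies $\Phi\Delta=0$ (a crossingless representative has empty submatrix $A$, and the remaining commutator block contributes no nontrivial generator to the minor ideal under the convention used), $K'$ cannot be equivalent to the unknot, which is precisely the non-triviality claim.

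There is no substantive obstacle to overcome: both ingredients, namely invariance of $\Phi\Delta$ under odd crossing switches and invariance under the diagrammatic moves, are already in hand. The only care needed is a one-line justification that $\Phi\Delta(\text{unknot})=0$ under the chosen convention, which follows by direct inspection of the crossingless diagram. After that, the argument is a single line: apply the odd-crossing skein theorem repeatedly, then compare nonzero invariants.
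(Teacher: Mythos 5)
Your overall strategy is exactly the one the paper intends (the corollary is stated without proof, but the surrounding text makes the argument clear): iterate the odd-crossing skein theorem to get $\Phi\Delta(K')=\Phi\Delta(K)\neq 0$, then conclude $K'$ is not the unknot because $\Phi\Delta$ is an invariant and vanishes on the unknot. The induction over the finitely many odd crossings is fine.

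The one step you flag as needing care is, however, the step where your justification goes wrong. You claim $\Phi\Delta(\text{unknot})=0$ ``by direct inspection of the crossingless diagram,'' arguing that the empty submatrix $A$ forces the invariant to vanish. Under the standard convention the determinant of a $0\times 0$ matrix is $1$, not $0$; moreover, for $n=0$ the matrix $M$ degenerates to the $3\times 3$ commutator block, whose $2\times 2$ minors include $(1-q)^2$, $(s-1)^2$ and $(1-\theta)(\theta-1)$, so the gcd of the minors is a unit. The crossingless diagram therefore does not exhibit the vanishing you need, and the Proposition $\Phi\Delta(K)=\det(A)$ is really a statement for diagrams with $n\geq 1$ crossings. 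The correct justification is the one the paper itself supplies in the sentence following the corollary: the (parity) virtual Alexander polynomial vanishes on all classical knots, hence on the unknot; equivalently, one can compute $\det(A)=0$ directly on a one-crossing (kinked) unknot diagram, where all crossings are even. With that substitution your argument is complete, and it in fact yields the stronger conclusion also drawn in the paper, namely that $K'$ cannot even be classical.
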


As a result, for any $K$ with $ \Phi \Delta (K) \neq 0$, $K$ can not be unknotted by changing the sign of only odd crossings. Further, many $K$ can not be changed into a classical knot by changing the sign of odd crossings.
Recall that the Alexander biquandle polynomial is zero for all classical knots. The index of a crossing, see Chrisman and Dye \cite{chrismandye} and Kauffman and Folwaczny \cite{kauffmanfolwaczny}, can show that changing the sign of specific odd crossings will not unknot or classicalize the knot diagram. One question to consider is if the two invariants detect the same set of unknottable diagrams.

\begin{example} The diagram of $K_{3.1}$ contains two odd crossings and two virtual crossings.
The sign of either of the two crossings can be switched without changing the value of the polynomial.

The diagram of $K_{4.7} $ contains four odd crossings and two virtual crossings;
This diagram cannot be turned into a classical diagram by changing the sign of the odd crossings.

The diagram of $K_{4.9} $ contains two odd crossings and four virtual crossings; the diagram can not be changed into a classical diagram by changing the sign of the odd crossings.

The diagram of $K_{6.32008} $ contains four odd crossings and two virtual crossings;
the diagram can not be turned into a classical diagram by changing the sign of the odd crossings.
\end{example}

\subsection{Symmetries}

We consider the effect of various symmetries on the polynomial.

\subsubsection{Action under reverse}

Given a virtual knot $K$, the \textit{reverse} of $K$, denoted $K'$, is obtained by reversing the orientation of the virtual knot. (More generally, for a virtual link, the inverse is obtained by reversing the orientation on all components of the link \cite{CransHenrichNelson}.) See figure \ref{fig:reversecrossing}.
\begin{figure}
\begin{subfigure}{0.33\linewidth}
\[ \begin{array}{c} \scalebox{0.5}{\fontsize{204pt}{20pt}\selectfont%% Creator: Inkscape inkscape 0.92.4, www.inkscape.org
%% PDF/EPS/PS + LaTeX output extension by Johan Engelen, 2010
%% Accompanies image file 'standardcrossing.pdf' (pdf, eps, ps)
%%
%% To include the image in your LaTeX document, write
%%   \input{<filename>.pdf_tex}
%%  instead of
%%   \includegraphics{<filename>.pdf}
%% To scale the image, write
%%   \def\svgwidth{<desired width>}
%%   \input{<filename>.pdf_tex}
%%  instead of
%%   \includegraphics[width=<desired width>]{<filename>.pdf}
%%
%% Images with a different path to the parent latex file can
%% be accessed with the `import' package (which may need to be
%% installed) using
%%   \usepackage{import}
%% in the preamble, and then including the image with
%%   \import{<path to file>}{<filename>.pdf_tex}
%% Alternatively, one can specify
%%   \graphicspath{{<path to file>/}}
%% 
%% For more information, please see info/svg-inkscape on CTAN:
%%   http://tug.ctan.org/tex-archive/info/svg-inkscape
%%
\begingroup%
  \makeatletter%
  \providecommand\color[2][]{%
    \errmessage{(Inkscape) Color is used for the text in Inkscape, but the package 'color.sty' is not loaded}%
    \renewcommand\color[2][]{}%
  }%
  \providecommand\transparent[1]{%
    \errmessage{(Inkscape) Transparency is used (non-zero) for the text in Inkscape, but the package 'transparent.sty' is not loaded}%
    \renewcommand\transparent[1]{}%
  }%
  \providecommand\rotatebox[2]{#2}%
  \newcommand*\fsize{\dimexpr\f@size pt\relax}%
  \newcommand*\lineheight[1]{\fontsize{\fsize}{#1\fsize}\selectfont}%
  \ifx\svgwidth\undefined%
    \setlength{\unitlength}{93.75990452bp}%
    \ifx\svgscale\undefined%
      \relax%
    \else%
      \setlength{\unitlength}{\unitlength * \real{\svgscale}}%
    \fi%
  \else%
    \setlength{\unitlength}{\svgwidth}%
  \fi%
  \global\let\svgwidth\undefined%
  \global\let\svgscale\undefined%
  \makeatother%
  \begin{picture}(1,1.27563896)%
    \lineheight{1}%
    \setlength\tabcolsep{0pt}%
    \put(0,0){\includegraphics[width=\unitlength,page=1]{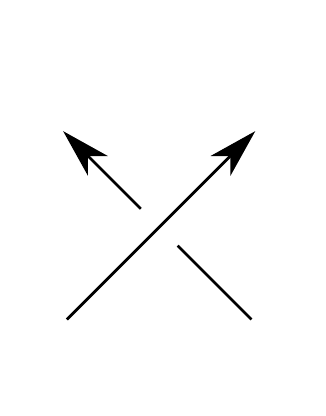}}%
    \put(0.77241269,0.01081333){\color[rgb]{0,0,0}\makebox(0,0)[lt]{\lineheight{0}\smash{\begin{tabular}[t]{l}b\end{tabular}}}}%
    \put(0.03548267,0.01081333){\color[rgb]{0,0,0}\makebox(0,0)[lt]{\lineheight{0}\smash{\begin{tabular}[t]{l}a\end{tabular}}}}%
    \put(0.77197998,0.96410617){\color[rgb]{0,0,0}\makebox(0,0)[lt]{\lineheight{0}\smash{\begin{tabular}[t]{l}c\end{tabular}}}}%
    \put(-0.0116843,0.93814138){\color[rgb]{0,0,0}\makebox(0,0)[lt]{\lineheight{0}\smash{\begin{tabular}[t]{l}d\end{tabular}}}}%
  \end{picture}%
\endgroup%
} \end{array}  \]
\caption{Crossing from K}
\label{fig:standardcrossing}
\end{subfigure}
\begin{subfigure}{0.33\linewidth}
\[ \begin{array}{c} \scalebox{0.5}{\fontsize{204pt}{20pt}\selectfont%% Creator: Inkscape inkscape 0.92.4, www.inkscape.org
%% PDF/EPS/PS + LaTeX output extension by Johan Engelen, 2010
%% Accompanies image file 'kreversecrossing.pdf' (pdf, eps, ps)
%%
%% To include the image in your LaTeX document, write
%%   \input{<filename>.pdf_tex}
%%  instead of
%%   \includegraphics{<filename>.pdf}
%% To scale the image, write
%%   \def\svgwidth{<desired width>}
%%   \input{<filename>.pdf_tex}
%%  instead of
%%   \includegraphics[width=<desired width>]{<filename>.pdf}
%%
%% Images with a different path to the parent latex file can
%% be accessed with the `import' package (which may need to be
%% installed) using
%%   \usepackage{import}
%% in the preamble, and then including the image with
%%   \import{<path to file>}{<filename>.pdf_tex}
%% Alternatively, one can specify
%%   \graphicspath{{<path to file>/}}
%% 
%% For more information, please see info/svg-inkscape on CTAN:
%%   http://tug.ctan.org/tex-archive/info/svg-inkscape
%%
\begingroup%
  \makeatletter%
  \providecommand\color[2][]{%
    \errmessage{(Inkscape) Color is used for the text in Inkscape, but the package 'color.sty' is not loaded}%
    \renewcommand\color[2][]{}%
  }%
  \providecommand\transparent[1]{%
    \errmessage{(Inkscape) Transparency is used (non-zero) for the text in Inkscape, but the package 'transparent.sty' is not loaded}%
    \renewcommand\transparent[1]{}%
  }%
  \providecommand\rotatebox[2]{#2}%
  \newcommand*\fsize{\dimexpr\f@size pt\relax}%
  \newcommand*\lineheight[1]{\fontsize{\fsize}{#1\fsize}\selectfont}%
  \ifx\svgwidth\undefined%
    \setlength{\unitlength}{96.19424109bp}%
    \ifx\svgscale\undefined%
      \relax%
    \else%
      \setlength{\unitlength}{\unitlength * \real{\svgscale}}%
    \fi%
  \else%
    \setlength{\unitlength}{\svgwidth}%
  \fi%
  \global\let\svgwidth\undefined%
  \global\let\svgscale\undefined%
  \makeatother%
  \begin{picture}(1,1.23323497)%
    \lineheight{1}%
    \setlength\tabcolsep{0pt}%
    \put(0,0){\includegraphics[width=\unitlength,page=1]{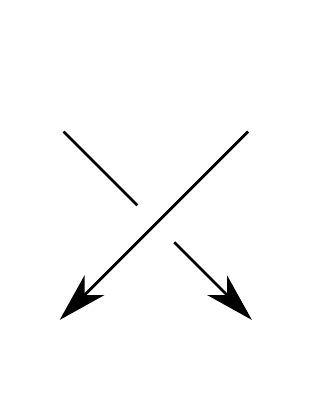}}%
    \put(0.77817212,0.01053968){\color[rgb]{0,0,0}\makebox(0,0)[lt]{\lineheight{0}\smash{\begin{tabular}[t]{l}b\end{tabular}}}}%
    \put(0.02446216,0.01053968){\color[rgb]{0,0,0}\makebox(0,0)[lt]{\lineheight{0}\smash{\begin{tabular}[t]{l}a\end{tabular}}}}%
    \put(0.74232133,0.93970804){\color[rgb]{0,0,0}\makebox(0,0)[lt]{\lineheight{0}\smash{\begin{tabular}[t]{l}c\end{tabular}}}}%
    \put(-0.01138861,0.90427826){\color[rgb]{0,0,0}\makebox(0,0)[lt]{\lineheight{0}\smash{\begin{tabular}[t]{l}d\end{tabular}}}}%
  \end{picture}%
\endgroup%
} \end{array}  \]
\caption{Reverse crossing}
\label{fig:reversecrossing}
\end{subfigure}
\begin{subfigure}{0.33\linewidth}
\[ \begin{array}{c} \scalebox{0.5}{\fontsize{204pt}{20pt}\selectfont%% Creator: Inkscape inkscape 0.92.4, www.inkscape.org
%% PDF/EPS/PS + LaTeX output extension by Johan Engelen, 2010
%% Accompanies image file 'switchcrossing.pdf' (pdf, eps, ps)
%%
%% To include the image in your LaTeX document, write
%%   \input{<filename>.pdf_tex}
%%  instead of
%%   \includegraphics{<filename>.pdf}
%% To scale the image, write
%%   \def\svgwidth{<desired width>}
%%   \input{<filename>.pdf_tex}
%%  instead of
%%   \includegraphics[width=<desired width>]{<filename>.pdf}
%%
%% Images with a different path to the parent latex file can
%% be accessed with the `import' package (which may need to be
%% installed) using
%%   \usepackage{import}
%% in the preamble, and then including the image with
%%   \import{<path to file>}{<filename>.pdf_tex}
%% Alternatively, one can specify
%%   \graphicspath{{<path to file>/}}
%% 
%% For more information, please see info/svg-inkscape on CTAN:
%%   http://tug.ctan.org/tex-archive/info/svg-inkscape
%%
\begingroup%
  \makeatletter%
  \providecommand\color[2][]{%
    \errmessage{(Inkscape) Color is used for the text in Inkscape, but the package 'color.sty' is not loaded}%
    \renewcommand\color[2][]{}%
  }%
  \providecommand\transparent[1]{%
    \errmessage{(Inkscape) Transparency is used (non-zero) for the text in Inkscape, but the package 'transparent.sty' is not loaded}%
    \renewcommand\transparent[1]{}%
  }%
  \providecommand\rotatebox[2]{#2}%
  \newcommand*\fsize{\dimexpr\f@size pt\relax}%
  \newcommand*\lineheight[1]{\fontsize{\fsize}{#1\fsize}\selectfont}%
  \ifx\svgwidth\undefined%
    \setlength{\unitlength}{94.24676981bp}%
    \ifx\svgscale\undefined%
      \relax%
    \else%
      \setlength{\unitlength}{\unitlength * \real{\svgscale}}%
    \fi%
  \else%
    \setlength{\unitlength}{\svgwidth}%
  \fi%
  \global\let\svgwidth\undefined%
  \global\let\svgscale\undefined%
  \makeatother%
  \begin{picture}(1,1.2690492)%
    \lineheight{1}%
    \setlength\tabcolsep{0pt}%
    \put(0,0){\includegraphics[width=\unitlength,page=1]{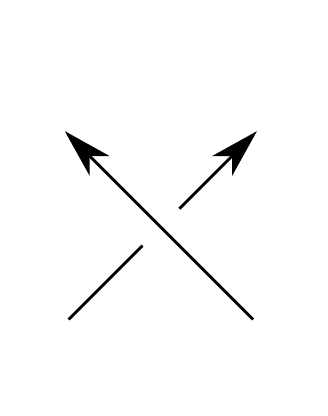}}%
    \put(0.77358837,0.01075747){\color[rgb]{0,0,0}\makebox(0,0)[lt]{\lineheight{0}\smash{\begin{tabular}[t]{l}b\end{tabular}}}}%
    \put(0.04046523,0.01075747){\color[rgb]{0,0,0}\makebox(0,0)[lt]{\lineheight{0}\smash{\begin{tabular}[t]{l}a\end{tabular}}}}%
    \put(0.7731579,0.95912573){\color[rgb]{0,0,0}\makebox(0,0)[lt]{\lineheight{0}\smash{\begin{tabular}[t]{l}c\end{tabular}}}}%
    \put(-0.01162394,0.93329508){\color[rgb]{0,0,0}\makebox(0,0)[lt]{\lineheight{0}\smash{\begin{tabular}[t]{l}d\end{tabular}}}}%
  \end{picture}%
\endgroup%
} \end{array}  \]
\caption{Switch crossing}
\label{fig:switchcrossing}
\end{subfigure}  \\
\begin{subfigure}{0.49\linewidth}
\[ \begin{array}{c} \scalebox{0.5}{\fontsize{204pt}{20pt}\selectfont%% Creator: Inkscape inkscape 0.92.4, www.inkscape.org
%% PDF/EPS/PS + LaTeX output extension by Johan Engelen, 2010
%% Accompanies image file 'flipcrossing.pdf' (pdf, eps, ps)
%%
%% To include the image in your LaTeX document, write
%%   \input{<filename>.pdf_tex}
%%  instead of
%%   \includegraphics{<filename>.pdf}
%% To scale the image, write
%%   \def\svgwidth{<desired width>}
%%   \input{<filename>.pdf_tex}
%%  instead of
%%   \includegraphics[width=<desired width>]{<filename>.pdf}
%%
%% Images with a different path to the parent latex file can
%% be accessed with the `import' package (which may need to be
%% installed) using
%%   \usepackage{import}
%% in the preamble, and then including the image with
%%   \import{<path to file>}{<filename>.pdf_tex}
%% Alternatively, one can specify
%%   \graphicspath{{<path to file>/}}
%% 
%% For more information, please see info/svg-inkscape on CTAN:
%%   http://tug.ctan.org/tex-archive/info/svg-inkscape
%%
\begingroup%
  \makeatletter%
  \providecommand\color[2][]{%
    \errmessage{(Inkscape) Color is used for the text in Inkscape, but the package 'color.sty' is not loaded}%
    \renewcommand\color[2][]{}%
  }%
  \providecommand\transparent[1]{%
    \errmessage{(Inkscape) Transparency is used (non-zero) for the text in Inkscape, but the package 'transparent.sty' is not loaded}%
    \renewcommand\transparent[1]{}%
  }%
  \providecommand\rotatebox[2]{#2}%
  \newcommand*\fsize{\dimexpr\f@size pt\relax}%
  \newcommand*\lineheight[1]{\fontsize{\fsize}{#1\fsize}\selectfont}%
  \ifx\svgwidth\undefined%
    \setlength{\unitlength}{97.37392267bp}%
    \ifx\svgscale\undefined%
      \relax%
    \else%
      \setlength{\unitlength}{\unitlength * \real{\svgscale}}%
    \fi%
  \else%
    \setlength{\unitlength}{\svgwidth}%
  \fi%
  \global\let\svgwidth\undefined%
  \global\let\svgscale\undefined%
  \makeatother%
  \begin{picture}(1,1.2632927)%
    \lineheight{1}%
    \setlength\tabcolsep{0pt}%
    \put(0,0){\includegraphics[width=\unitlength,page=1]{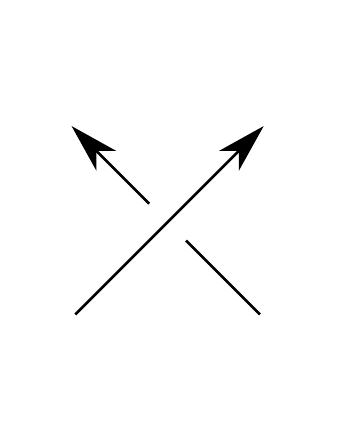}}%
    \put(0.03871441,0.01041199){\color[rgb]{0,0,0}\makebox(0,0)[lt]{\lineheight{0}\smash{\begin{tabular}[t]{l}b\end{tabular}}}}%
    \put(0.78912647,0.04541087){\color[rgb]{0,0,0}\makebox(0,0)[lt]{\lineheight{0}\smash{\begin{tabular}[t]{l}a\end{tabular}}}}%
    \put(-0.01170194,0.9333224){\color[rgb]{0,0,0}\makebox(0,0)[lt]{\lineheight{0}\smash{\begin{tabular}[t]{l}c\end{tabular}}}}%
    \put(0.76370997,0.93832129){\color[rgb]{0,0,0}\makebox(0,0)[lt]{\lineheight{0}\smash{\begin{tabular}[t]{l}d\end{tabular}}}}%
  \end{picture}%
\endgroup%
} \end{array}  \]
\caption{Flip crossing}
\label{fig:flipcrossing}
\end{subfigure}
\begin{subfigure}{0.49\linewidth}
\[ \begin{array}{c} \scalebox{0.5}{\fontsize{204pt}{20pt}\selectfont%% Creator: Inkscape inkscape 0.92.4, www.inkscape.org
%% PDF/EPS/PS + LaTeX output extension by Johan Engelen, 2010
%% Accompanies image file 'flipswitchcrossing.pdf' (pdf, eps, ps)
%%
%% To include the image in your LaTeX document, write
%%   \input{<filename>.pdf_tex}
%%  instead of
%%   \includegraphics{<filename>.pdf}
%% To scale the image, write
%%   \def\svgwidth{<desired width>}
%%   \input{<filename>.pdf_tex}
%%  instead of
%%   \includegraphics[width=<desired width>]{<filename>.pdf}
%%
%% Images with a different path to the parent latex file can
%% be accessed with the `import' package (which may need to be
%% installed) using
%%   \usepackage{import}
%% in the preamble, and then including the image with
%%   \import{<path to file>}{<filename>.pdf_tex}
%% Alternatively, one can specify
%%   \graphicspath{{<path to file>/}}
%% 
%% For more information, please see info/svg-inkscape on CTAN:
%%   http://tug.ctan.org/tex-archive/info/svg-inkscape
%%
\begingroup%
  \makeatletter%
  \providecommand\color[2][]{%
    \errmessage{(Inkscape) Color is used for the text in Inkscape, but the package 'color.sty' is not loaded}%
    \renewcommand\color[2][]{}%
  }%
  \providecommand\transparent[1]{%
    \errmessage{(Inkscape) Transparency is used (non-zero) for the text in Inkscape, but the package 'transparent.sty' is not loaded}%
    \renewcommand\transparent[1]{}%
  }%
  \providecommand\rotatebox[2]{#2}%
  \newcommand*\fsize{\dimexpr\f@size pt\relax}%
  \newcommand*\lineheight[1]{\fontsize{\fsize}{#1\fsize}\selectfont}%
  \ifx\svgwidth\undefined%
    \setlength{\unitlength}{97.37392267bp}%
    \ifx\svgscale\undefined%
      \relax%
    \else%
      \setlength{\unitlength}{\unitlength * \real{\svgscale}}%
    \fi%
  \else%
    \setlength{\unitlength}{\svgwidth}%
  \fi%
  \global\let\svgwidth\undefined%
  \global\let\svgscale\undefined%
  \makeatother%
  \begin{picture}(1,1.2632927)%
    \lineheight{1}%
    \setlength\tabcolsep{0pt}%
    \put(0,0){\includegraphics[width=\unitlength,page=1]{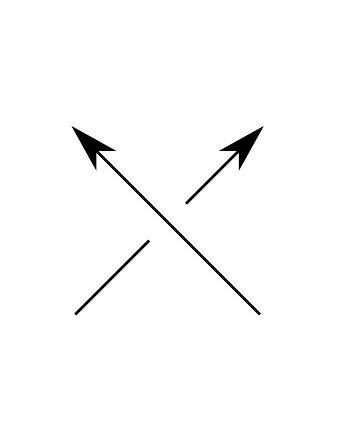}}%
    \put(0.03871441,0.01041199){\color[rgb]{0,0,0}\makebox(0,0)[lt]{\lineheight{0}\smash{\begin{tabular}[t]{l}b\end{tabular}}}}%
    \put(0.78412644,0.04541087){\color[rgb]{0,0,0}\makebox(0,0)[lt]{\lineheight{0}\smash{\begin{tabular}[t]{l}a\end{tabular}}}}%
    \put(-0.01170194,0.9333224){\color[rgb]{0,0,0}\makebox(0,0)[lt]{\lineheight{0}\smash{\begin{tabular}[t]{l}c\end{tabular}}}}%
    \put(0.76370997,0.93832129){\color[rgb]{0,0,0}\makebox(0,0)[lt]{\lineheight{0}\smash{\begin{tabular}[t]{l}d\end{tabular}}}}%
  \end{picture}%
\endgroup%
} \end{array}  \]
\caption{Flip switch crossing}
\label{fig:flipswitchcrossing}
\end{subfigure}
\caption{Crossing under symmetry}
\label{fig:symmetry}
\end{figure}

\subsubsection{Action under Flipping}

Given a virtual knot $K$, the \textit{flip} of $K$, denoted $Flip(K)$ is defined by taking the two-dimensional plane containing knot diagram and rotating it, along with the diagram, by 180 degrees.  (Note: this action is sometimes referred to as a ``pancake flip'' as an aid for visualization.  In the first article on virtual knot theory \cite{VKT}, Kauffman calls this symmetry $Flip(K)$.
See figure \ref{fig:flipcrossing}.

\subsubsection{Action under the Switch Operation}

Given a virtual knot $K$, the \textit{switch} of $K$, denoted $K^\ast$,  is the virtual knot formed by switching all of the crossings of $K$ (see figure \ref{fig:switchcrossing}) . (Note that this operation is typically referred to as the vertical mirror image in \cite{GreenVKTables} and the horizontal mirror image in \cite{BiquandlesforVKs}. We follow the convention defined by Crans, Henrich, and Nelson \cite{CransHenrichNelson} and call it the switch to avoid confusion.) In \cite {VKT} Kauffman and \cite{BiquandlesforVKs} Hrencecin and Kauffman considers the result on the quandle and biquandle, respectively, under the swithc operation. Here we show how this operation changes the virtual parity Alexander polynomial. Note that by equations (\ref{eqn:oddrelations}) and (\ref{eqn:virtualrelations}), the relations on the odd and virtual crossings do not change. However, equations (\ref{eqn:positiveevenrelations}) and (\ref{eqn:negativeevenrelations}) produce a non-trivial change at the even crossings.

\subsubsection{Action under Switched Flip}
Given a virtual knot $K$, the \textit{switched flip} of $K$, denoted $K^\uparrow$, is the virtual knot defined as $K^\uparrow = (Flip(K))^\ast$, see figure \ref{fig:flipswitchcrossing}. (Note, in the Knot Atlas this is referred as the horizontal mirror image \cite{GreenVKTables}. However, Hrencecin and Kauffman \cite{BiquandlesforVKs} call this operation the vertical mirror image. And further, Crans, Henrich, and Nelson refer to this as the reversed inverse of the virtual knot \cite{CransHenrichNelson}.)

\begin{theorem}
Given a virtual knot diagram $K$,
\begin{enumerate}
\item  $ \Phi \Delta_K (s,t,q, \theta) =\Phi \Delta_{K'} (s,t,q, \theta) $.
\item $ \Phi \Delta_K (s,t,q, \theta) =\Phi \Delta_{K^\ast} (\frac{1}{s} ,\frac{1}{t},q, \theta)$
\item $ \Phi \Delta_K (s,t,q, \theta) =\Phi \Delta_{Flip(K)} (s ,t,\frac{1}{q},\frac{1}{ \theta})$
\item  $ \Phi \Delta_K (s,t,q, \theta) =\Phi \Delta_{K^\uparrow} (\frac{1}{s} ,\frac{1}{t},\frac{1}{q}, \frac{1}{\theta} )$
\end{enumerate}
\end{theorem}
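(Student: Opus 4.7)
The plan is to handle each of the four symmetries by invoking the previous proposition identifying $\Phi\Delta(K)$ with $\det(A)$ and tracking how the crossing relations (\ref{eqn:positiveevenrelations})--(\ref{eqn:virtualrelations}) transform under the given operation. Writing $A$ as an assembly of two-by-two Jacobian blocks, one per crossing, each case reduces to a local check at each crossing type.

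For (1), I would verify that reversing the orientation at a single crossing produces the same system of relations after relabeling its semi-arcs: the local pair of equations simply gets rewritten in terms of the opposite pair of ``input'' and ``output'' strands, which is an invertible relabeling within the free group. The abelianized matrix $A$ is therefore unchanged up to row and column operations that preserve $\det(A)$ up to a unit, giving equality of the polynomials.

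For (2), compare equations (\ref{eqn:positiveevenrelations}) and (\ref{eqn:negativeevenrelations}): the switch exchanges the positive-even relations for negative-even ones (and vice versa), while leaving odd and virtual relations untouched. I would check directly that the abelianized Jacobian of a negative crossing equals that of the corresponding positive crossing after substituting $s \mapsto s^{-1}$ and $t \mapsto t^{-1}$, so this substitution transforms $A_K$ into $A_{K^\ast}$ up to a unit. For (3), the flip rotates the plane $180^\circ$, which swaps the two incoming and two outgoing strands at every crossing. In the odd and virtual relations (\ref{eqn:oddrelations})--(\ref{eqn:virtualrelations}) this swap is precisely the replacement $\theta \mapsto \theta^{-1}$, $q \mapsto q^{-1}$; the even-crossing Jacobian entries are independent of $q,\theta$ and thus unchanged. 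Part (4) then follows by composing (2) and (3), since the switch and flip act on disjoint sets of variables.

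The main technical obstacle will be part (2): the positive and negative even relations are not manifestly mirror images of one another, so the substitution $s,t \mapsto s^{-1}, t^{-1}$ must be verified by an honest abelianized Fox-derivative computation on the two-by-two block contributed by the crossing. A secondary subtlety in part (3) is confirming that the planar flip preserves the parity of each classical crossing; this holds because parity is read off the Gauss code, which is invariant under orientation-preserving symmetries of the ambient plane, so odd crossings remain odd and even crossings remain even after flipping.
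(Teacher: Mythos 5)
Your proposal is correct and follows essentially the same route as the paper: a crossing-by-crossing comparison of the abelianized Fox-derivative blocks, showing that reverse contributes only column swaps, switch contributes column swaps plus $s\mapsto s^{-1}$, $t\mapsto t^{-1}$ at even crossings, flip contributes $q\mapsto q^{-1}$, $\theta\mapsto\theta^{-1}$ at odd and virtual crossings, and (4) is the composition. Your added remark that the flip preserves the parity of each classical crossing (since parity is read off the Gauss code) is a point the paper leaves implicit, but otherwise the two arguments coincide.
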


\begin{proof}
We consider the crossing from $K$ in figure \ref{fig:standardcrossing}. From a positive even crossing, we obtain the relations
\begin{align} \label{eqn:even}
c&=sas^{-1} & d &=a b sa^{-1} s^{-1} .
\end{align}
From an odd crossing,
\begin{align}  \label{eqn:odd}
c&=qaq^{-1} & d &=q^{-1} b q.
\end{align}
Equation \ref{eqn:even}  makes the following contribution to the standard matrix
\begin{align} \label{eqn:evenmatrix}
\begin{bmatrix} s & 0 & -1 & 0 \\ 1- st & t & -1  & 0\end{bmatrix}
\end{align}
and equation \ref{eqn:odd}
makes the following contribution
\begin{align} \label{eqn:oddmatrix}
\begin{bmatrix} q & 0 & -1 & 0 \\ 0 &\frac{1}{q}& 0 & -1 \end{bmatrix} .
\end{align}
 The relations given by a virtual crossing are similar to the odd crossing.
We calculate the relations and sub-matrix for a reversed, switched, and flipped crossing.

The reverse crossing (see figure \ref{fig:reversecrossing}), has relations
\begin{align*}
a&=scs^{-1}, & b &=c d sc^{-1} s^{-1} & \text{(even)}, \\
a&=q c q^{-1}, & b &=q d q^{-1} & \text{(odd)}.
\end{align*}
The corresponding matrices are
\begin{align} \label{eqn:reverse1}
\begin{bmatrix} -1 & 0 & s & 0 \\ 0 & -1 & 1-st & t \end{bmatrix} & \text{(even)}, \\
\label{eqn:reverse2} \begin{bmatrix}
-1 & 0 & q & 0 \\
0 & -1 & 0 & q
\end{bmatrix} & \text{(odd)}.
\end{align}
The matrices in equations \ref{eqn:reverse1}  and \ref{eqn:reverse2} are related to the standard matrices (equations \ref{eqn:evenmatrix} and
\ref{eqn:oddmatrix})  by a sequence of column swaps which only changes the determinant by sign.

In the switched crossing (see figure \ref{fig:switchcrossing}),
\begin{align*}
d&=s^{-1} b s, & c &=s^{-1} b^{-1} s ab & \text{(even)}, \\
c&=q a q^{-1}, & d &=q^{-1} b q & \text{(odd)}.
\end{align*}
The corresponding matrices are
\begin{align} \label{eqn:switch1}
\begin{bmatrix} 0 & \frac{1}{s} & 0 & -1 \\ \frac{1}{t} & \frac{-1}{st} +1 & -1& 0 \end{bmatrix} & \text{(even)}, \\
\label{eqn:switch2} \begin{bmatrix}
q & 0 & -1 & 0 \\
0 & \frac{1}{q} & 0 & -1
\end{bmatrix} & \text{(odd)}.
\end{align}
These matrices in equations \ref{eqn:switch1} and \ref{eqn:switch2} are related to the standard matrices (equations
\ref{eqn:evenmatrix} and \ref{eqn:oddmatrix})  by a sequence of column swaps and the exchanges
$s \rightarrow \frac{1}{s}$ and $t \rightarrow \frac{1}{t}$.

In the flipped crossing (see figure \ref{fig:flipcrossing}),
\begin{align*}
d&=s b s^{-1}, & c &= ba s b^{-1} s^{-1}  & \text{(even)}, \\
c&=q^{-1} a q, & d &=q b q^{-1} & \text{(odd)}.
\end{align*}
The corresponding matrices are
\begin{align} \label{eqn:flip1}
\begin{bmatrix} 0 & s & 0 & -1 \\ t & 1-st  & -1& 0 \end{bmatrix} & \text{(even)}, \\
\label{eqn:flip2} \begin{bmatrix}
\frac{1}{q} & 0 & -1 & 0 \\
0 & q  & 0 & -1
\end{bmatrix} & \text{(odd)}.
\end{align}
The matrices in equations \ref{eqn:flip1} and \ref{eqn:flip2} are related to the standard matrices (equations \ref{eqn:evenmatrix} and \ref{eqn:oddmatrix}) by a sequence of column swaps and the exchanges
$q \rightarrow \frac{1}{q}$ and $\theta \rightarrow \frac{1}{\theta}$.
\end{proof}

\section{Conclusion}
In future work, we plan to consider the applications of odd Vassiliev invariants and their strength. This result suggests that virtual knot diagrams can not be unknotted by crossing change on odd crossings except in specialized circumstances. We make the following conjecture about even Vassiliev invariants. An even, degree one Vassiliev invariant, $\it{V}$ has the property that $ \it{V}(K_+) - \it{V} (K_-) =0 $ for even crossings and  $ \it{V}(K_+) - \it{V} (K_-) \neq 0 $ for odd crossings.
\begin{conjecture} There are no even Vassilliev invariants. \end{conjecture}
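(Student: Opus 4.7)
The plan is to show that the defining property of an even Vassiliev invariant is incompatible with the consistency constraints imposed by the Reidemeister moves, in direct analogy with---but opposite to---the situation for the odd invariant constructed in this paper. There, the odd crossing relations were deliberately made sign-independent (equation \ref{eqn:oddrelations}), which produced the vanishing $\Phi\Delta(K_+) - \Phi\Delta(K_-) = 0$ at odd crossings. An even invariant would require the reverse arrangement: odd crossings carrying sign-dependent data and even crossings insensitive to sign. The goal of the proof is to rule out any such assignment.

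First, I would formalize a Vassiliev skein in the virtual setting. For a candidate invariant $V$, define $V'(c) = V(K_+) - V(K_-)$ at a crossing $c$, and require for a degree-one invariant that this value be determined by local data together with the parity of $c$. Under the even property, $V'(c) = 0$ whenever $c$ is even and $V'(c) \neq 0$ whenever $c$ is odd. Second, I would examine the constraints arising from Reidemeister 3 moves that mix parities. As noted in the proof of invariance of $PG_K$, a mixed R3 must involve exactly two odd crossings and one even crossing, and in such a configuration the three participating crossings are interchanged in a prescribed way. Passing to the singular picture---replacing each crossing in turn by a double point---yields a relation among the three values $V'(c_i)$. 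The plan is to show that this relation forces the two nonzero odd contributions either to cancel against each other across the move or to be absorbed by the even contribution; the even property rules out the second possibility, so the first must hold in every instance, and from this I would derive by varying the diagram a uniform vanishing that contradicts $V'(c) \neq 0$ at odd crossings.

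The chief obstacle is that parity is a \emph{global} property of the Gauss code, so $V'(c)$ is not local in the classical finite-type sense: a crossing's parity can be changed by Reidemeister moves performed elsewhere in the diagram. Consequently, the degree-one condition needs to be formulated so that these long-range effects can be tracked. A necessary preliminary step is to confirm that the conjecture is not already falsified by the odd writhe $J(K) = \sum_{c\text{ odd}}\mathrm{sgn}(c)$, which formally satisfies $J(K_+) - J(K_-) = \pm 2$ at odd crossings and zero at even crossings. Either the intended definition of ``degree one Vassiliev invariant'' implicitly restricts to polynomial-valued invariants or to a full finite-type filtration that excludes $J$, or the conjecture must be refined. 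Clarifying this and then proving a genuine incompatibility via the mixed-parity R3 analysis is where the main work lies; if that strategy stalls, an alternative would be to exhibit an explicit family of knots, such as the $K_{6.32008}$ example computed earlier, whose mutual relationships by mixed-parity moves force a contradictory system of equations on any candidate $V$.
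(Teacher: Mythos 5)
The statement you are addressing is presented in the paper as a conjecture; the authors give no proof, so there is nothing to compare your argument against. Judged on its own terms, your proposal is a strategy outline rather than a proof: the central step --- extracting a relation among the three values $V'(c_i)$ from a mixed-parity Reidemeister III move and deriving a contradiction --- is never carried out, and it is doubtful it can be, since a crossing change is not one of the diagrammatic moves and an R3 move by itself imposes no linear relation on the differences $V(K_+)-V(K_-)$ taken at its three crossings. Your stated ``chief obstacle'' is also not actually an obstacle: for the Gaussian parity used in this paper, the parity of a crossing is preserved by diagrammatic moves performed elsewhere in the diagram (this stability is exactly what makes parity usable in invariants), so there are no long-range effects to track.

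The decisive point is the one you relegate to a ``necessary preliminary step.'' The odd writhe $J(K)=\sum_{c\ \mathrm{odd}}\mathrm{sgn}(c)$ is a virtual knot invariant; switching an even crossing leaves it unchanged, while switching an odd crossing preserves that crossing's parity and changes $J$ by $\pm 2\neq 0$. This is precisely the defining property of an even degree one Vassiliev invariant as the paper states it, so the conjecture is false as written, and no proof strategy can succeed without first refining the definition (for instance, by imposing additional finite-type axioms that exclude $J$, or by restricting to a particular class of invariants). You should promote this observation from a caveat to the main content of your answer: either exhibit $J$ as a counterexample, or identify exactly which extra hypothesis is intended, and only then ask whether an impossibility argument along your R3 lines is available for the restricted class.
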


\bibliographystyle{plain}
\bibliography{VPApolynomial}

\end{document}